\title[ ]{ Proof of  geometric    Borg's Theorem in arbitrary dimensions }
\author{Wencai Liu}
\address[W. Liu]{ Department of Mathematics, Texas A\&M University, College Station, TX 77843-3368, USA} \email{liuwencai1226@gmail.com; wencail@tamu.edu}
\keywords{ Bloch varieties,  Borg's theorem,   discrete periodic Schr\"odinger operator, entire function, inverse  problem,   spectral gaps.}
\thanks{{\em 2020 Mathematics Subject Classification.} Primary: 47B36. Secondary: 35P05,  12E05.}
\theoremstyle{plain}
\newtheorem{theorem}{Theorem}[section]
\newtheorem{lemma}[theorem]{Lemma}
\newtheorem{remark}{Remark}
\newcommand{\C}{\mathbb{C}}
\newcommand{\Z}{\mathbb{Z}}
\newcommand{\R}{\mathbb{R}}
\theoremstyle{plain}
\newtheorem{conjecture}{Conjecture}
\begin{document}
	
	
	\begin{abstract}
		Let  $\Delta+V$    be the discrete Schr\"odinger operator,  where $\Delta$ is the discrete Laplacian on $\mathbb{Z}^d$ and  potential $V:\mathbb{Z}^d\to \C$ is $\Gamma$-periodic with  $\Gamma=q_1\mathbb{Z}\oplus q_2 \mathbb{Z}\oplus\cdots\oplus q_d\mathbb{Z}$.  In this study, we  establish a  comprehensive characterization of  complex-valued $\Gamma$-periodic functions such that the Bloch variety of $\Delta+V$ contains a graph of an entire function, in  particular,  we show that  there are exactly $q_1q_2\cdots q_d$  such functions (up to  Floquet isospectrality and  translation). Moreover, 	by applying this understanding  to 	real-valued functions $V$,    we prove that $V$ is constant if and only if the Bloch variety of $\Delta+V$ contains a graph of an entire function, which   confirms  the conjecture concerning  the geometric version of Borg's theorem in arbitrary dimensions.

	\end{abstract}
	
	\maketitle 
	\section{Introduction and main results}

	Given $q_l\in \Z_+$, $l=1,2,\cdots,d$,
	let $\Gamma=q_1\Z\oplus q_2 \Z\oplus\cdots\oplus q_d\Z$.
	A function $V: \Z^d\to \C$ is said to be $\Gamma$-periodic if for any $\gamma\in \Gamma$ and $n\in\Z^d$, $V(n+\gamma)=V(n)$.

	Let $\Delta$ be the discrete Laplacian on $\ell^2(\Z^d)$, namely
	\begin{equation*}
	(\Delta u)(n)=\sum_{||n^\prime-n||_1=1}u(n^\prime),
	\end{equation*}
	where $n=(n_1,n_2,\cdots,n_d)\in\Z^d$, $n^\prime=(n_1^\prime,n_2^\prime,\cdots,n_d^\prime)\in\Z^d$ and 
	\begin{equation*}
	||n^\prime-n||_1=\sum_{l=1}^d |n_l-n^\prime_l|.
	\end{equation*}
	Consider the discrete  Schr\"{o}dinger operator on $\ell^2({\Z}^d)$,
	\begin{equation} \label{h0}
	H=\Delta +V ,
	\end{equation}
	where $V$ is $\Gamma$-periodic.

Denote by $\{\textbf{e}_j\}$, $j=1,2,\cdots d$ the standard basis in $\Z^d$.

  Floquet theory leads to the study of equation 
		\begin{equation}   \label{spect_0}
		(\Delta u)(n)+V(n)u(n)=\lambda u(n), n\in\Z^d,
	\end{equation}
with the so called Floquet-Bloch boundary condition
	\begin{equation}  
	u(n+q_j\textbf{e}_j)=e^{2\pi i k_j}u(n),j=1,2,\cdots,d, \text{ and } n\in \Z^d.\label{Fl}
	\end{equation}

 Following this, we introduce a fundamental domain $W$ for $\Gamma$:
\begin{equation*}
W=\{n=(n_1,n_2,\cdots,n_d)\in\Z^d: 0\leq n_j\leq q_{j}-1, j=1,2,\cdots, d\}.
\end{equation*}
By writing out $H=\Delta +V$ as acting on the $Q=q_1q_2\cdots q_d$ dimensional space 
$\{u=\{ u(n)\}_{n\in W}: u(n)\in\C\}$, the equation \eqref{spect_0} with boundary conditions \eqref{Fl} 
translates into the eigenvalue problem for a $Q\times Q$ matrix ${D}_V(k)$, where $k=(k_1,k_2,\cdots,k_d)$.

Assume that $V$ is real. 
For each $k\in\R^d$,  $D_V(k)$ has $Q=q_1q_2\cdots q_d$ eigenvalues. Order them in non-decreasing  order
\begin{equation*}
\lambda_V^1(k)\leq \lambda_V^2(k)\leq\cdots \leq \lambda_V^Q(k).
\end{equation*}
We call   $\lambda_V^m(\cdot)$ the $m$-th (spectral) band function, $m=1,2,\cdots, Q$.

Denote by 
	\begin{align*}
[a_V^m,b^m_V]= [\min_{k\in\R^d}\lambda_V^m (k),\max_{k\in\R^d}\lambda_V^m (k)], m=1,2,\cdots, Q.
\end{align*}

For a real function $V$, 
the spectrum $\Delta+V$  is the union of the spectral band $[a_V^m,b_V^m]$, $m=1,2,\cdots, Q$:
\begin{equation}\label{gband}
\sigma (\Delta+V)={\bigcup }_{m= 1}^{Q}[a_V^m,b_V^m].
\end{equation}

If $b_V^m<a_V^{m+1}$ for some $m=1,2,\cdots,Q-1$, $(b_V^m, a_V^{m+1}) $  is called a spectral gap.
 
 This paper primarily focuses on discrete periodic Schr\"odinger operators, yet we will also  discuss the history and developments of continuous periodic Schr\"odinger operators.

We start with a review of the classical Borg's Theorem in both  continuous \cite{borg} and discrete  (e.g. \cite[Theorem 5.4.21]{s2011} and \cite[Theorem 3.6]{GKTBook}) cases.

[{\bf Classical Borg's Theorem}] 
Let $d=1$.  Assume that $V$ is a   real-valued periodic function.  Then the following statements are equivalent:
\begin{enumerate}
	\item The potential $V$ is  a constant function.
	\item  $  \Delta+V$ has no spectral gaps.
\end{enumerate}

In modern proofs of Borg’s uniqueness theorem for the one-dimensional (matrix-valued) case, the periodicity condition on the potential is replaced by the reflectionless property. For details on these methods, see ~\cite{ges1,ges2,ges3}, where the trace formula serves as a key component in the proofs.  
Moreover, Borg’s uniqueness theorem can be viewed as a special case of the broader finite-gap spectral theory—a rich and elegant branch of inverse spectral theory~\cite{Sim11,SY97}.


Despite the extensive development of Borg-type results in one dimension, the situation in higher dimensions is fundamentally different. In particular, the analogue of Borg’s theorem fails for $d \geq 2$.
Indeed,  for $d\geq2$, 
there are many non-constant (small) real periodic functions $V$ such that   $\Delta+V$ has no spectral gaps (e.g. continuous case~\cite[Theorem 6.1]{ksurvey} and discrete case~\cite{hj18,ef,fk2}).  
In higher dimensions, it is quite common to see spectral bands overlapping. This is addressed in the Bethe-Sommerfeld Conjecture (e.g. \cite{ps,kar04,par08,vel}).   So, spectral gaps provide much less information on potentials in higher dimensions.

   Denote by    $B(H)\subset \C^d\times\C$  Bloch variety of $H=\Delta+V$: 
 \begin{equation}
 B(H)=\{(k,\lambda) \in\C^d\times\C: \det(D_V(k)-\lambda I)=0 \}.
 \end{equation} 
 
In \cite{as,ktcmh90,ksurvey},  Borg's theorem was reformulated geometrically in terms of the Bloch variety $B(H)$ of $H=\Delta+V$, which could be generalized to arbitrary dimensions.
  This gives rise to  the following conjecture. 
  
  {\bf Conjecture 1}  \cite[Conjecture 5.39]{ksurvey}.
  
  Assume that $V$ is a real-valued periodic function.  Then the following statements are equivalent:
  \begin{enumerate}
  	\item The potential $V$ is  a constant function.
  	\item There exists an entire function $f(k)$ such that  for all $k\in\C^d$, $(k,f(k))\in B(H)$.
  \end{enumerate}
Conjecture 1 is commonly known as the geometric version of Borg's theorem. This conjecture was also mentioned in \cite{as,ktcmh90}. 
 It has been discussed in  \cite{as,ktcmh90}  that the statement that for  real-valued periodic functions $V$ in one dimension,  the function $V$ is constant if and only if  there exists an entire function $f(k)$ such that  $(k,f(k))\in B(H)$  for all $k\in\C^d$ is equivalent to  the classical Borg's theorem.

The celebrated work of  Kn\"orrer-Trubowitz proves Conjecture 1 for $d=2$, as an application of directional compactification of the Bloch variety\cite{ktcmh90}.

In the present work, 
 we expand our investigation to encompass   complex-valued potentials. 
 We establish a criterion that enables us to determine whether the Bloch variety $B(\Delta+V)$, associated with a complex-valued function $V$ in arbitrary dimensions, contains  a graph $(k,f(k))$ of an entire function $\lambda=f(k)$. 
 As an application of this criterion to real potentials, we prove Conjecture 1.

  \begin{theorem}\label{mainthm2}
 	Assume that $V$ is a  complex-valued  $\Gamma$-periodic function.
 	Then the  following statements are equivalent:
 	\begin{enumerate}
 		\item  There exist $l\in W$ and a constant $K$ such that 
 		\begin{equation}\label{g43}
 		\det (D_V(k)-\lambda I ) = \prod_{n\in W}\left( K -\lambda+\sum_{j=1}^d \left(e^{2\pi \frac{n_j+k_j}{q_j} i} +e^{-2\pi \frac{n_j+l_j+k_j}{q_j} i}\right) \right).
 		\end{equation}
 		\item There exists an entire function $f(k)$ such that  $(k,f(k))\in B(H)$.
 	\end{enumerate}

 \end{theorem}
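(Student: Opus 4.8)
The direction (1) $\Rightarrow$ (2) is the easy one: if the characteristic polynomial factors as in \eqref{g43}, then for each fixed $k$ one of the roots is the explicit expression
\[
f(k)=K+\sum_{j=1}^d\left(e^{2\pi\frac{k_j}{q_j}i}+e^{-2\pi\frac{l_j+k_j}{q_j}i}\right),
\]
obtained by taking $n=0\in W$, and this is manifestly entire in $k$ (a finite sum of exponentials). Hence $(k,f(k))\in B(H)$, and I would just write this out. The substance of the theorem is the converse.

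For (2) $\Rightarrow$ (1), the plan is to exploit the structure of $D_V(k)$ as a function of the Floquet variables. Writing $z_j=e^{2\pi ik_j/q_j}$, the entries of $D_V(k)$ are Laurent polynomials in the $z_j$, and $p(k,\lambda):=\det(D_V(k)-\lambda I)$ is, up to a monomial factor, a polynomial in $z_1^{\pm1},\dots,z_d^{\pm1},\lambda$ of degree $Q$ in $\lambda$ with leading coefficient $(-1)^Q$. The hypothesis says this polynomial has a root $\lambda=f(k)$ with $f$ entire on all of $\C^d$. First I would analyze the growth and periodicity of $f$: since $f(k)$ solves a monic (in $\lambda$) polynomial whose coefficients are trigonometric polynomials, standard bounds on roots give that $f$ grows at most exponentially in $|\mathrm{Im}\,k|$ and is bounded for real $k$; combined with periodicity considerations (the Bloch variety is invariant under $k_j\mapsto k_j+1$, so the set of $\lambda$-roots is, but $f$ itself need only be quasi-periodic) this should force $f$ to be a finite exponential sum — essentially a Laurent polynomial $g(z_1,\dots,z_d)$ in the $z_j$ after passing to a common denominator $z_j\mapsto z_j^{1/q_j}$, i.e. $f(k)=g(e^{2\pi ik_1/q_1},\dots,e^{2\pi ik_d/q_d})$. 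This is the place I expect to spend real effort: pinning down exactly which exponents $e^{\pm2\pi in_jk_j/q_j}$ can occur, using that $D_V(k)$ only involves nearest-neighbor hopping so the Newton polytope of $p$ in each $z_j$ is tightly constrained (degree $1$ in each $z_j^{\pm1}$ after clearing denominators appropriately).

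Once $f$ is known to be such a Laurent polynomial, the factor $(\lambda-f(k))$ divides $p(k,\lambda)$ in the ring of Laurent polynomials (by Gauss's lemma, since $p$ is monic in $\lambda$ and this ring is a UFD), so $p(k,\lambda)=(f(k)-\lambda)\,r(k,\lambda)$ with $r$ again a Laurent polynomial, monic of degree $Q-1$ in $\lambda$. The key structural input I would then invoke is the explicit Fourier/monomial structure of $D_V(k)$: the coefficient of the top power $\lambda^{Q-1}$ in $p$ is $-\mathrm{tr}\,D_V(k)$, which equals $-\sum_{n\in W}V(n)$ plus the hopping terms, and the coefficient of $\lambda^{Q}$-adjacent terms controls the "mirror" structure $n_j\mapsto n_j+l_j$. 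Matching the lowest- and highest-degree monomials in the $z_j$ on both sides of $p=(f-\lambda)r$, and using that the off-diagonal entries of $D_V(k)$ carry the $z_j^{\pm1}$ factors in a rank-one-per-direction fashion, I would show inductively (peeling off one factor at a time, each time recording a shift vector) that $r$ itself must factor completely, yielding the product form \eqref{g43} with a single common shift $l\in W$ and constant $K$. The main obstacle is the middle step — rigorously going from "$B(H)$ contains an entire graph" to "$f$ is a Laurent polynomial of the prescribed, minimal Newton-polytope shape"; after that, the factorization argument is an algebraic bookkeeping exercise driven by the nearest-neighbor structure of $\Delta$.
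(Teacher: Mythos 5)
Your direction (1) $\Rightarrow$ (2) is fine and matches the paper. For (2) $\Rightarrow$ (1), however, the two steps you flag as needing "real effort" are precisely where the proof lives, and as sketched both have genuine gaps. First, "exponential type $+$ bounded on $\R^d$ $+$ periodicity considerations $\Rightarrow$ finite exponential sum" does not work as stated: an entire function of exponential type bounded on the reals need not be an exponential sum (e.g.\ $\sin(\pi k)/(\pi k)$), and $f$ itself is not a priori periodic --- only the \emph{set} of roots of $\det(D_V(k)-\lambda I)$ is invariant under $k_j\mapsto k_j+1$, so $f(k+\mathbf{e}_j)$ is merely \emph{some} root, and you cannot upgrade this to quasi-periodicity of $f$ without first knowing the roots are globally separated, which they are not. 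The paper closes this gap analytically rather than by growth estimates: it passes to $z_j=e^{2\pi i k_j/q_j}$, expands $\tilde f$ as a Laurent series in $z_1$ on $(\C^{\star})^d$, and constructs a domain $\Omega$ (with $|z_1|$ large and $\hat z$ confined to a product of thin annuli) in which Gershgorin separation makes all $Q$ eigenvalue branches simple and holomorphic with explicit expansions $\rho^1_{l_1}z_1+\hat\lambda^l_V(\hat z)+\sum_{m\geq 1}c_m(\hat z)z_1^{-m}$; matching Laurent coefficients on $\Omega$ and invoking uniqueness of Laurent coefficients kills all coefficients of $z_1^m$, $m\geq 2$, globally, and repeating with $z_1\to z_1^{-1}$ and with each variable yields $\tilde f= K+\sum_j(\rho^j_{l_j}z_j+\rho^j_{-l_j'}z_j^{-1})$. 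Nothing in your sketch substitutes for this.

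Second, your endgame --- divide out $(\lambda-f)$ and "peel off factors inductively by Newton-polytope matching" --- is not how the full factorization \eqref{g43} arises, and I do not see why the quotient $r(k,\lambda)$ would again contain an entire root branch; that is exactly the kind of claim the theorem is about. The paper instead uses the symmetry $\tilde{\mathcal P}_V(z,\lambda)=\mathcal P_V(z_1^{q_1},\dots,z_d^{q_d},\lambda)$: once one root has the explicit exponential form, all $Q$ functions $\tilde f(\rho^1_{n_1}z_1,\dots,\rho^d_{n_d}z_d)$, $n\in W$, are roots, and being generically distinct they exhaust the spectrum, which gives the product formula and the single shift $l$ in one stroke. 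Without this symmetry input your "algebraic bookkeeping" step does not close.
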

 \begin{remark}
 	\begin{enumerate}
 		\item Denote by ${\bf K}$ the constant function whose value is $K$ (regarding  ${\bf K}$ as a  $\Gamma$-periodic function). 
 		Direct computations (e.g. see  Lemma \ref{lesep} below) imply that
 		\begin{equation}\label{g451}
 		\det (D_{\bf K}(k)-\lambda I ) = \prod_{n\in W}\left( K  -\lambda+\sum_{j=1}^d \left(e^{2\pi \frac{n_j+k_j}{q_j} i} +e^{-2\pi \frac{n_j+k_j}{q_j} i}\right) \right).
 		\end{equation}
 		\item Setting  $l_j = 0 $  for all  $j=1,2,\cdots,d$ in equation \eqref{g43} yields the right side of \eqref{g451}.
 	\end{enumerate}
 \end{remark}
 Denote by $\sigma( D_{V} (k)) $ the (counting the algebraic multiplicity)  eigenvalues of $D_{V} (k)$. Two $\Gamma$-periodic potentials $V$ and $Y$ are called 
 Floquet isospectral if  
 \begin{equation}\label{gfi}
 \sigma(D_{V} (k))= \sigma(D_{Y} (k)), \text{ for all } k \in\R^d.
 \end{equation}
 
 For periodic Schr\"odinger operators, $V$ and $Y$ are  
 Floquet isospectral  if and only if the Bloch varieties $B(\Delta+V)$ and $B(\Delta+Y)$ are the same.
 
Combining Theorem \ref{mainthm2} with some basic facts of Floquet isospectrality, we have

 \begin{theorem}\label{mainthm1}
  Assume that $V$ is a  real-valued $\Gamma$-periodic function.
Then the following statements are equivalent:
\begin{enumerate}
	\item The potential $V$ is  a constant function.
	\item There exists an entire function $f(k)$ such that  $(k,f(k))\in B(H)$.
\end{enumerate}
 
 \end{theorem}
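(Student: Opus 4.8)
The plan is to derive both implications from Theorem~\ref{mainthm2} together with one elementary fact about Floquet isospectrality. The implication (1)$\Rightarrow$(2) is immediate: if $V\equiv K$ then by \eqref{g451} the Bloch variety $B(H)$ is the union over $n\in W$ of the graphs of $\lambda = K+\sum_{j=1}^d\bigl(e^{2\pi i (n_j+k_j)/q_j}+e^{-2\pi i (n_j+k_j)/q_j}\bigr)$, each of which is entire in $k\in\C^d$, so any one branch (say $n=0$) provides the required $f$.

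For (2)$\Rightarrow$(1) I would argue as follows. By Theorem~\ref{mainthm2}, statement (2) forces the factorization \eqref{g43} for some $l\in W$ and some constant $K$. Write $s_n(k)$ for the $k$-dependent bracket on the right of \eqref{g43}, so that the eigenvalues of $D_V(k)$, counted with multiplicity, are exactly $\{K+s_n(k):n\in W\}$. Comparing the coefficients of $\lambda^{Q-1}$ on the two sides of \eqref{g43} identifies $\operatorname{tr}D_V(k)$ with $QK+\sum_{n\in W}s_n(k)$; since $\sum_{n_j=0}^{q_j-1}e^{\pm 2\pi i n_j/q_j}=0$ whenever $q_j\ge 2$, the quantity $\sum_{n\in W}s_n(k)$ collapses to precisely the diagonal contribution of $\Delta$ coming from the coordinates with $q_j=1$, which is also the $k$-dependent part of $\sum_{n\in W}(D_V(k))_{nn}$; subtracting, $\sum_{n\in W}V(n)=QK$, and in particular $K=Q^{-1}\sum_{n\in W}V(n)\in\R$.

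Next I would use that $V$ is real. For $k\in\R^d$ the matrix $D_V(k)$ is Hermitian, so all its eigenvalues $K+s_n(k)$ are real; in particular $\operatorname{Im}s_0(k)\equiv 0$ on $\R^d$. Setting $\theta_j=2\pi l_j/q_j$, a short trigonometric identity gives $\operatorname{Im}s_0(k)=-2\sum_{j=1}^d\sin(\theta_j/2)\cos\bigl(2\pi k_j/q_j+\theta_j/2\bigr)$; freezing all variables but $k_j$ forces the $j$-th summand to vanish identically in $k_j$, whence $\sin(\pi l_j/q_j)=0$, i.e. $l_j=0$, for every $j$. Thus $l=0$ and \eqref{g43} becomes literally \eqref{g451} with the real constant $K$, so $V$ is Floquet isospectral to the constant potential ${\bf K}$.

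The remaining step — the "basic fact about Floquet isospectrality" — is that a real $\Gamma$-periodic $V$ that is Floquet isospectral to a constant $K$ must satisfy $V\equiv K$; this is the only ingredient not already packaged in Theorem~\ref{mainthm2}, and is where I expect the last bit of work to lie. I would prove it by comparing second traces: $\operatorname{tr}D_V(k)^2=\operatorname{tr}D_{\bf K}(k)^2$ for all $k$. The off-diagonal (hopping) entries of $D_V(k)$ do not involve $V$, and the $k$-dependent diagonal corrections (present only for coordinates with $q_j=1$) cancel once one uses $\sum_{n\in W}V(n)=QK$; what survives is $\sum_{n\in W}V(n)^2=QK^2$. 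Together with $\sum_{n\in W}V(n)=QK$ this is the equality case of the Cauchy--Schwarz inequality (nonnegativity of the variance of $V$ over $W$), so $V(n)=K$ for all $n\in W$, which is (1). Throughout, the only delicate bookkeeping is for coordinates with $q_j=1$, where $\Delta$ contributes to the diagonal of $D_V(k)$ and where $l_j$ is automatically $0$.
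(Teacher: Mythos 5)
Your proof is correct and follows the same overall route as the paper: (1)$\Rightarrow$(2) from \eqref{g451}, and (2)$\Rightarrow$(1) by invoking Theorem \ref{mainthm2}, using reality of the eigenvalues of the Hermitian matrix $D_V(k)$ for $k\in\R^d$ to force $l=(0,\dots,0)$ and $K\in\R$, and then concluding from Floquet isospectrality of $V$ with the constant potential ${\bf K}$. The one place you genuinely diverge is the last step: the paper disposes of it by citing an Ambarzumian-type theorem (\cite{kap3}, \cite{hat}), whereas you prove it directly by comparing $\operatorname{tr}D_V(k)$ and $\operatorname{tr}D_V(k)^2$ with those of $D_{\bf K}(k)$ and reading off $\sum_{n\in W}V(n)=QK$ and $\sum_{n\in W}V(n)^2=QK^2$, so that the variance of $V$ over $W$ vanishes. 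That trace computation is sound (the cross term $2\operatorname{tr}(D_0(k)\operatorname{diag}V)$ indeed only sees the mean of $V$, since the diagonal of $D_0(k)$ is constant in $n$), and it is essentially the standard proof of the cited Ambarzumian-type result, so your write-up has the advantage of being self-contained; your explicit trigonometric verification that $\operatorname{Im}s_0\equiv 0$ forces $l_j=0$ also fills in a step the paper only asserts.
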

Denote by $X$ the space of all complex-valued $\Gamma$-periodic functions $V$. We define an equivalence relation  $ \sim$ on $ X$: we say $V\sim Y$ if and only if $V$ and $Y$ are Floquet isospectral.

Denote by $X_e$ all complex-valued $\Gamma$-periodic functions $V$ that have zero mean and the Bloch variety $B(\Delta+V)$ contains the graph of an entire function. More precisely, $V\in X_e$ if and only if $\sum_{n\in W} V(n)=0$ and there exists an entire function $f_V(k)$ such that for all $k\in\C^d$, $ (k,f_V(k))\in B(\Delta+V)$.
    \begin{theorem}\label{mainthm4}
We have  that
\begin{equation}\label{gnew1}
\# \{ X_e/\sim\}=Q,
\end{equation}
    and
    \begin{equation}\label{gnew58}
    \# \{ X_e\}\leq Q Q!.
    \end{equation}

   \end{theorem}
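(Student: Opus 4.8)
The plan is to use Theorem~\ref{mainthm2} to convert membership in $X_e$ into a rigid condition on the characteristic polynomial, and then to count. First I would note that if $V\in X_e$, then \eqref{g43} holds for some $l\in W$ and some $K$; comparing the $\lambda^{Q-1}$-coefficients of the two sides of \eqref{g43} --- i.e.\ comparing traces --- gives $\sum_{n\in W}V(n)=QK$, so the zero--mean hypothesis forces $K=0$. Thus $V\in X_e$ if and only if $\det(D_V(k)-\lambda I)=P_l(k,\lambda):=\prod_{n\in W}\left(-\lambda+\phi_n(k,l)\right)$ for some $l\in W$, where $\phi_n(k,l):=\sum_{j=1}^d\left(e^{2\pi\frac{n_j+k_j}{q_j}i}+e^{-2\pi\frac{n_j+l_j+k_j}{q_j}i}\right)$. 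Since Floquet isospectrality is the same as having equal characteristic polynomials for all $k$, and since $X_e$ is saturated under Floquet isospectrality (both the Bloch variety and the mean are isospectral invariants), $X_e$ decomposes as a disjoint union of Floquet classes indexed by the values of $l$ that are actually realized by a potential.

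To prove \eqref{gnew1} I would show that $[V]\mapsto l(V)$ is a well-defined bijection from $X_e/\sim$ to $W$. Well-definedness and injectivity amount to showing that $P_l$ determines $l$: the power sums $p_m(k):=\sum_{n\in W}\phi_n(k,l)^m$ are polynomials in the coefficients of $P_l$, and a direct computation shows that suitable Fourier coefficients of the $p_m$ (in $k\in\mathbb{T}^d$) recover each $e^{-2\pi i l_j/q_j}$ separately --- for instance the constant Fourier mode of $p_2$ equals $2Q\sum_{j}e^{-2\pi i l_j/q_j}$, and after letting one variable tend to $i\infty$, so that $P_l$ degenerates to the $(d-1)$-dimensional polynomial in the remaining variables, one peels off the coordinates one at a time. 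For surjectivity I would realize each $l\in W$ by a potential: using the separable structure of $\Delta+V$ for $V(n)=\sum_j V_j(n_j)$ (whose Floquet spectrum at $k$ is the set of all sums $\sum_j\mu_j$ with $\mu_j$ in the one--dimensional Floquet spectrum of $V_j$ at $k_j$), it suffices to treat $d=1$, where the required $q$-periodic potential is one whose Hill discriminant is $p_q(\lambda,e^{-2\pi i l/q})$, with $p_0=2$, $p_1=\lambda$, $p_{m+1}=\lambda p_m-e^{-2\pi i l/q}\,p_{m-1}$; indeed the factorization identity underlying Theorem~\ref{mainthm2} gives $\prod_{n=0}^{q-1}\left(-\lambda+\phi_n\right)=\pm\big(p_q(\lambda,e^{-2\pi i l/q})-e^{2\pi ik}-e^{-2\pi ik}\big)$, so $\Delta_V=p_q(\cdot,e^{-2\pi i l/q})$ is exactly what is needed, and such a $V$ exists because the discriminant map $V\mapsto\Delta_V$ is onto the monic polynomials of degree $q$ (it is proper, the leading forms of its coordinate functions being the elementary symmetric functions of the $V(n)$, which vanish simultaneously only at $0$, and it is dominant). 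This yields $\#\{X_e/\sim\}=|W|=Q$; it is consistent with the count ``up to Floquet isospectrality and translation'' in the abstract, since translating $V$ conjugates $D_V(k)$ by a permutation--and--phase matrix and so leaves the Floquet class unchanged.

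Finally, for \eqref{gnew58} it remains, given the $Q$ Floquet classes above, to bound by $Q!$ the number of potentials in each class. A potential $V$ in the class of $l$ has Bloch variety equal to the union of the $Q$ graphs $\lambda=\phi_n(k,l)$ of entire functions, and the point is that this extremely rigid picture pins down $D_V(k)$, hence its diagonal $V$, up to a relabeling of the fundamental domain $W$. Concretely I would compactify $B(H)$ in generic directions --- equivalently, analyze the asymptotics of the eigenvalue branches of $D_V(k)$ as $\operatorname{Im}k\to\infty$ along generic rays: two Floquet--isospectral potentials must have matching asymptotic data along every such ray, and tracking how the branches get labeled forces the two potentials to agree after some permutation of $W$, so each class has at most $|S_W|=Q!$ elements and therefore $\#\{X_e\}\le Q\cdot Q!$. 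The genuinely hard step is exactly this finiteness, with the sharp symbolic bound $Q!$ --- the higher--dimensional counterpart of the classical finiteness of one--dimensional Floquet--isospectral sets --- and I expect to obtain it from the same directional--compactification analysis that underlies Theorem~\ref{mainthm2}, or else from known finiteness results for Floquet isospectrality of discrete periodic Schr\"odinger operators.
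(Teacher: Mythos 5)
Your treatment of \eqref{gnew1} is essentially the paper's: $K=0$ by comparing traces (Lemma \ref{keylem4}); at most $Q$ admissible characteristic polynomials, so at most $Q$ Floquet classes by Lemma \ref{keylem3}; and realization of each $l\in W$ by reducing to $d=1$ via separable potentials and solving a one-dimensional inverse problem. The paper does this last step by applying Friedland's additive inverse eigenvalue theorem (Lemma \ref{keythm}) at $k=0$ and then upgrading to all $k$ via Lemma \ref{keylem1}; your ``the discriminant map is proper and dominant, hence surjective'' argument is the same algebraic-geometry mechanism, since the leading forms of the coefficient functions are exactly the elementary symmetric polynomials of the $V(n)$. (You also supply the distinctness of $P_l$ for distinct $l$, which the paper leaves implicit; that is a worthwhile addition.)

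The genuine gap is in \eqref{gnew58}. You correctly reduce it to bounding each Floquet class by $Q!$, but you do not prove that bound: the directional-compactification heuristic you offer rests on the claim that two Floquet-isospectral potentials ``agree after some permutation of $W$,'' which is not justified and not literally true (isospectral potentials need not be permutations of one another, and permuting $V|_W$ does not in general preserve Floquet isospectrality), and your fallback is an appeal to unspecified ``known finiteness results.'' The actual mechanism, which the paper takes from Friedland \cite{fri} (Lemma \ref{keylem5}), is purely pointwise and requires no analysis of the Bloch variety: fix any single $k_0$; the condition $\det(D_Y(k_0)-\lambda I)=\det(D_V(k_0)-\lambda I)$ for all $\lambda$ is a system of $Q$ polynomial equations in the $Q$ unknowns $Y(n)$, $n\in W$, of the form $\sigma_i(Y)+g_i(Y)=0$ with $\deg g_i<i$, where $\sigma_i$ is the $i$-th elementary symmetric polynomial. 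Since the top-degree forms $\sigma_1,\dots,\sigma_Q$ have no common nontrivial zero, the system has finitely many solutions and a Bezout count gives at most $1\cdot 2\cdots Q=Q!$ of them. This is the same algebraic input you already invoked for surjectivity in dimension one; the fix is simply to apply it again, at a single $k_0$, to obtain the cardinality bound, rather than reaching for a global compactification argument.
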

   \begin{remark}
The equation \eqref{gnew1} in Theorem \ref{mainthm4} says that  up to the Floquet isospectrality, there are exactly $Q$  complex-valued $\Gamma $-periodic functions of mean zero whose Bloch varieties contain  the graphs of entire functions. 
   \end{remark}

  Before diving into the challenges and solutions that this paper presents, it's crucial to underscore  the significance of Bloch varieties.
  Analytic and algebraic properties of Bloch   varieties and their associated Fermi varieties, $F_{\lambda} (H)={k\in\C^d: (k,\lambda)\in B(H)}$,  play a   pivotal role in the study of  periodic   operators. 
  They contribute significantly to understanding aspects such as embedded eigenvalues, isospectrality, and quantum ergodicity \cite{LiuPreprint:fermi, liufloquet23, GKTBook,kv06cmp,kvcpde20,liu1,shi1,liuqua22,ms22,sabri2023flat,F2021Ballistic,fk1,fk2,shijst,lishi,flshi}.  For an extensive understanding and background,  we refer readers to the surveys \cite{ksurvey,liujmp22,kuchment2023analytic}.

  Bloch and Fermi varieties provide a framework to reformulate the (inverse) spectral problems concerning periodic Schr\"odinger operators. This paper, for instance, geometrically recasts Borg's theorem in any dimension. 
 Another instance is the author's recent work,  where he  used  these varieties to  establish  several rigidity theorems which partially depend on the reinterpretation of (inverse) spectral problems~\cite{liufloquet23,LiuPreprint:fermi}.
 
 The benefits of such reformulations are multifold. Spectral theory concerning real potentials is notably more developed than complex potentials. By reformulating spectral and inverse spectral problems via Bloch and Fermi varieties, we can shift our focus towards the spectral theory of complex-valued potentials as opposed to merely real potentials. As an example, both Theorem 1.1 in this paper and the isospectrality results in \cite{LiuPreprint:fermi} allow potentials to be complex-valued.
 Moreover, these reformulations using Bloch and Fermi varieties provide opportunities to employ various tools from algebraic and analytic geometry,  and  multi-variable complex analysis,  to study spectral  problems arising from periodic operators.

 Switching to the topic of our  proof, consider $z_j=e^{2\pi i k_j},j=1,2,\cdots,d$ and $\mathcal{D}_V(z)= D_V(k)$. For discrete periodic operators, it is known that $\det (D_V(k)-\lambda I)$ simplifies to a Laurent polynomial $\mathcal P_V(z,\lambda)=\det(\mathcal{D}_V(z)-\lambda I)$ after changing variables.

  Dating back to  1980s,  B\"atting, Gieseker, Kn\"{o}rrer, and Trubowitz ~\cite{GKTBook,ktcmh90,battig1988toroidal,batcmh92,bktcm91} employed compactification to explore Bloch and Fermi varieties, thereby successfully proving Conjecture 1 for $d=2$ and  deriving irreducibility results in dimensions $d=2$ and $3$. The author introduced a novel approach  to prove the irreducibility of a family of Laurent polynomials, leading to the confirmation of two irreducibility conjectures of Bloch and Fermi varieties in arbitrary dimensions \cite{liu1}.  Some ideas in ~\cite{liu1} have been generalized  to periodic graph operators \cite{FLM1,FLM2}.

  Inspired by the proofs developed in \cite{liu1, LiuPreprint:fermi}, this paper   focuses on the study of the Laurent polynomial $\mathcal P_V(z,\lambda)$,  rather than  the compactification approach used in \cite{ktcmh90}.
  Our strategy in proving Theorem \ref{mainthm2} involves the application of multi-variable complex analysis and perturbation theory to establish the asymptotics of eigenvalues within an appropriate domain $\Omega$.  In  this domain, we  show that all eigenvalues of $\mathcal{D}_V(z)$ are distinct,  and hence  eigenvalues of $\mathcal{D}_V(z)$ are holomorphic in $\Omega$.
 To carry out the plan, we restrict $(z_2,z_3,\cdots,z_d)$  to  a suitably bounded domain and allow $z_1$ to approach infinity. Subsequently, we perform a Laurent series expansion of eigenvalues within the domain $\Omega$ with respect to $z_1$. The asymptotics of eigenvalues enable us to obtain that all Laurent coefficients of $z_1$ vanish, except for those of degrees $1,0,-1$. The process eventually leads to the proof of Theorem \ref{mainthm2}. When $V$ is real, by applying Theorem \ref{mainthm2}, we deduce that $V$ is Floquet isospectral to a constant potential. Finally, an Ambarzumian type theorem (aka  inverse spectral theorem for free Schr\"odinger operators) concludes Theorem \ref{mainthm1}.

  The proof of Theorem \ref{mainthm4} significantly relies on the combination of tools from spectral analysis and algebraic geometry. Utilizing spectral analysis, we are able to reduce the validation of Theorem \ref{mainthm4} to problems in algebraic geometry: study the solutions of a system consisting of $Q$ polynomial equations with $Q$ variables. It is important to note that this system exhibits the asymptotics of $Q$ elementary symmetric polynomials.
  To tackle this challenge, we apply well-established principles from algebraic geometry  to our algebraic equations. This enables us to show that our target system is always solvable and has a finite number (with an explicit bound) of solutions. As a result, we are able to conclude the proof of Theorem \ref{mainthm4}.

 Finally, we would like to compare the current paper with two previous works by the author \cite{liu1, LiuPreprint:fermi}. In \cite{liu1}, the author  studied  the irreducibility of  Bloch and Fermi varieties. Furthermore, in \cite{LiuPreprint:fermi}, the author introduced a new type of inverse spectral problem called Fermi isospectrality and established several rigidity theorems.
  In the present paper, our focus is on investigating whether the Bloch varieties contain graphs of entire functions. As an application of our main results, we have successfully proved the geometric Borg's theorem in  arbitrary dimensions. Although all three papers involve the study of the Laurent polynomial $\mathcal P(z,\lambda)$, the subjects explored in each paper are fundamentally different. Moreover, the specific approaches and proofs developed in this paper are entirely new.

 	The rest of this paper is structured as follows. 
 	Section \ref{S2} revisits some basics   related to  discrete periodic Schr\"odinger operators. 
 	In Section \ref{S3},  we provide several  technical lemmas about eigenvalue perturbations. 
 In Section \ref{S4}, we   complete  the proof of  Theorems \ref{mainthm2} and \ref{mainthm1}.
 Finally, Section \ref{S5} provides  the proof of  Theorem \ref{mainthm4}.

		\section{ Basics}\label{S2}
	
	In this section, we  revisit some basic facts about the  discrete periodic Schr\"odinger operators, see, e.g., \cite{liu1,ksurvey,liujmp22}.
Define the discrete Fourier transform $\hat{V}(l) $ for $l\in {W}$ (note that we identify  $W$ with its standard dual lattice) by 
\begin{equation*}
\hat{V}(l) =\frac{1}{{Q}}\sum_{ n\in {W} } V(n) \exp\left\{-2\pi i \left(\sum_{j=1}^d \frac{l_j n_j}{q_j} \right)\right\}.
\end{equation*}
 and  extend $\hat{V}(l)$ to $ \Z^d$    periodically, namely, for any $l\equiv m\mod \Gamma$,
\begin{equation*}
\hat{V}(l)=\hat{V}(m).
\end{equation*}

	Let $\C^{\star}=\C\backslash \{0\}$.  
Recall that  $z_j=e^{2\pi i k_j}$, $j=1,2,\cdots,d$,  $z=(z_1,z_2,\cdots,z_d)$,  $\mathcal{D}_V(z)=D_V(k)$ and $\mathcal{P}_V(z,\lambda)=\det (\mathcal{D}_V(z)-\lambda I)$.

Define
\begin{equation}\label{gtm}
\tilde{\mathcal{D}}_V(z)= \tilde{\mathcal{D}}_V(z_1,z_2,\cdots,z_d)= \mathcal{D}_V(z_1^{q_1},z_2^{q_2},\cdots,z_d^{q_d}),
\end{equation}
and 
\begin{equation}\label{gtp}
\tilde{\mathcal{P}}_V(z,\lambda)=\det( \tilde{\mathcal{D}}_V(z)-\lambda I)= \mathcal{P}_V(z_1^{q_1},z_2^{q_2},\cdots,z_d^{q_d},\lambda).
\end{equation}
Let $$\rho^j_{m}=e^{2\pi  \frac{m}{q_j} i},$$
where $0\leq m \leq q_j-1$, $j=1,2,\cdots,d$.

 We now state the following lemma:

\begin{lemma}\label{lesep} (e.g. \cite[Lemma 4.1]{liu1})
	Let $n=(n_1,n_2,\cdots,n_d) \in {W}$ and $n^\prime=(n_1^\prime,n_2^\prime,\cdots,n_d^\prime) \in {W}$. Then 
	$\tilde{\mathcal{D}}_V(z)$ is unitarily equivalent to 		
	$
	A(z)+B_V,
	$
	where $A(z)$ is a diagonal matrix with entries
	\begin{equation}\label{A}
	A(z,n,n^\prime)=\left(\sum_{j=1}^d \left(\rho^j_{n_j}z_j+\rho^j_{-n_j}z_j^{-1} \right)\right) \delta_{n,n^{\prime}}
	\end{equation}
	and \begin{equation}\label{gb}
	B_V(n,n^\prime)=\hat{V} \left(n_1-n_1^\prime,n_2-n_2^\prime,\cdots, n_d-n_d^\prime\right).
	\end{equation}
	In particular,
	\begin{equation*}
	\tilde{\mathcal{P}}_V(z, \lambda) =\det(A(z)+B_V-\lambda I).
	\end{equation*}

\end{lemma}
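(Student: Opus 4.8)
The plan is to prove Lemma \ref{lesep} by an explicit diagonalization of the Floquet-Bloch problem using the characters of the finite abelian group $\Z^d/\Gamma \cong \Z_{q_1}\times\cdots\times\Z_{q_d}$. First I would set up the natural unitary on $\ell^2(W)$ (equivalently, on the $Q$-dimensional Floquet fiber) given by a (normalized) discrete Fourier transform in each coordinate: define $\mathcal{F}$ by $(\mathcal{F}u)(n) = Q^{-1/2}\sum_{m\in W} u(m)\,\exp\{2\pi i \sum_{j} m_j n_j/q_j\}$ or its inverse, chosen so that conjugation by $\mathcal{F}$ turns the periodic multiplication operator $V$ into the convolution-type matrix $B_V$ with entries $\hat V(n-n')$ as in \eqref{gb}; this is just the standard fact that multiplication becomes convolution under Fourier transform, combined with the definition of $\hat V$ given at the start of Section \ref{S2}. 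The only mild care needed is to track which sign convention in the exponent is used so that the convolution kernel comes out as $\hat V(n_1-n_1',\dots,n_d-n_d')$ rather than its reflection, but either way it is unitarily equivalent to the stated form.

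Next I would compute the effect of $\mathcal{F}$ on the ``free'' part, i.e. on $\tilde{\mathcal D}_0(z) = \tilde{\mathcal D}_V(z)$ with $V\equiv 0$, which is the part coming from $\Delta$. The operator $\Delta$ acts on the Floquet fiber as a sum over the $d$ coordinate directions of shift operators twisted by the Bloch phase; after the substitution $z_j \mapsto z_j^{q_j}$ built into $\tilde{\mathcal D}_V$ in \eqref{gtm}, each such twisted shift in direction $j$ is, in the Fourier basis, diagonalized with eigenvalues $\rho^j_{n_j} z_j + \rho^j_{-n_j} z_j^{-1}$, where $\rho^j_m = e^{2\pi i m/q_j}$ are exactly the $q_j$-th roots of unity indexing the characters. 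Concretely, the cyclic shift on $\ell^2(\Z_{q_j})$ has eigenvalues the $q_j$-th roots of unity, and conjugating by the DFT sends it to the diagonal matrix with those eigenvalues; assembling the $d$ directions by tensor product gives the diagonal matrix $A$ of \eqref{A}. The substitution $z_j\to z_j^{q_j}$ is precisely what makes each eigenvalue of the form $\rho^j_{n_j} z_j + \rho^j_{-n_j}z_j^{-1}$ (a genuine Laurent monomial pair) rather than a $q_j$-th root.

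Then I would combine the two computations: since $\mathcal{F}$ is unitary and independent of $V$, conjugating $\tilde{\mathcal D}_V(z) = (\text{free part}) + V$ by $\mathcal{F}$ gives $A + B_V$ with $A$ and $B_V$ as claimed, and the determinant identity $\tilde{\mathcal P}_V(z,\lambda) = \det(A+B_V-\lambda I)$ follows immediately from unitary invariance of the characteristic polynomial together with \eqref{gtp}. I expect the main (though still modest) obstacle to be bookkeeping: carefully matching the chosen ordering of the basis of $\ell^2(W)$, the placement of the factor $q_j$ in the exponents, and the normalization $1/Q$ in $\hat V$, so that the off-diagonal block really is $\hat V(n-n')$ evaluated at the difference (using the $\Gamma$-periodic extension of $\hat V$ to handle differences landing outside $W$) and the diagonal really is the stated sum over $j$. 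None of this is deep; it is the standard identification of a periodic operator's Floquet matrix with $A + B_V$ in the Fourier basis, so I would present it by exhibiting the unitary $\mathcal{F}$, verifying the two block computations, and invoking unitary equivalence.
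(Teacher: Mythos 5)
Your proof is correct and is exactly the standard Floquet--Fourier computation that the paper itself relies on without writing out (Lemma \ref{lesep} is presented as a known basic fact, with references to \cite{liu1,ksurvey,liujmp22}). One bookkeeping caveat worth making explicit: with $D_V(k)$ defined as the restriction of $\Delta+V$ to the fundamental domain in the standard basis, the factors $z_j^{\pm q_j}$ of $\tilde{\mathcal D}_V$ sit only in the ``corner'' entries of the twisted shifts, so the plain DFT does \emph{not} diagonalize them by itself --- one must first conjugate by the diagonal gauge $\mathrm{diag}\bigl(\prod_{j} z_j^{n_j}\bigr)$, which is unitary only when every $|z_j|=1$; for general $z\in(\C^{\star})^d$ this yields similarity rather than unitary equivalence, which is all that is needed for the identity $\tilde{\mathcal{P}}_V(z,\lambda)=\det(A+B_V-\lambda I)$ actually used in the sequel.
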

\section{Eigenvalue perturbations}\label{S3}

 \begin{lemma}\label{leper}
 	Let $M=(M_{ij})$ be a complex
 	$L\times L$ matrix.
 	Denote by $B(M_{ii}, R_i) =\{x\in\C: |x-M_{ii}|\leq R_i\}$, $i=1,2,\cdots,L$, the Gershgorin discs, where
 	$R_i=\sum_{j\neq i}|M_{ij}|$.
 	Assume that the discs $B(M_{ii}, R_i)$, $i=1,2,\cdots,L$ are disjoint.
 	Then $M$ has exactly one eigenvalue in each disc $B(M_{ii}, R_i)$, $i=1,2,\cdots,L$.
 	
 \end{lemma}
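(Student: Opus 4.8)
The plan is a standard homotopy (continuity of spectra) argument: deform $M$ to its diagonal part, keep the Gershgorin discs disjoint throughout the deformation, and track how many eigenvalues sit in each disc.

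First I would record the elementary Gershgorin localization itself: if $\lambda$ is an eigenvalue of a square matrix with entries $c_{ij}$, with eigenvector $x$, and $i$ is an index where $|x_i|=\max_k|x_k|$, then comparing the $i$-th coordinates of the eigenvalue equation gives $|\lambda-c_{ii}|\,|x_i|\le\sum_{j\ne i}|c_{ij}|\,|x_j|\le\big(\sum_{j\ne i}|c_{ij}|\big)|x_i|$, so every eigenvalue lies in the union of the corresponding Gershgorin discs. Next, introduce the family $M(t)=D+t(M-D)$, $t\in[0,1]$, where $D=\mathrm{diag}(M_{11},\dots,M_{LL})$. The diagonal of $M(t)$ is still $D$, and its $i$-th Gershgorin radius is $tR_i\le R_i$; hence the Gershgorin discs of $M(t)$ are contained in the original ones, remain pairwise disjoint, and — by the previous sentence applied to $M(t)$ — for every $t\in[0,1]$ all eigenvalues of $M(t)$ lie in $\Omega:=\bigsqcup_{i=1}^L B(M_{ii},R_i)$.

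Now fix $i$. Because the closed discs are pairwise disjoint I may pick an open disc $U_i$ with $B(M_{ii},R_i)\subset U_i$ and $\overline{U_i}$ disjoint from all $B(M_{jj},R_j)$ with $j\ne i$; then no eigenvalue of any $M(t)$ lies on $\partial U_i$, so
\[
N_i(t):=\frac{1}{2\pi i}\oint_{\partial U_i}\operatorname{tr}\big((zI-M(t))^{-1}\big)\,dz
\]
is a well-defined integer — the number of eigenvalues of $M(t)$ in $U_i$ counted with algebraic multiplicity — and it depends continuously on $t$, since $(z,t)\mapsto(zI-M(t))^{-1}$ is continuous on $\partial U_i\times[0,1]$. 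An integer-valued continuous function on $[0,1]$ is constant, so $N_i(1)=N_i(0)$. On the other hand, the eigenvalues of $M(t)$ lying in $U_i$ are exactly those lying in $B(M_{ii},R_i)$, because there are none in $U_i\setminus\Omega$; and $N_i(0)$ simply counts the diagonal entries $M_{jj}$ inside $U_i$. The only such entry is $M_{ii}$, and disjointness of the discs forces $M_{jj}\ne M_{ii}$ for $j\ne i$ (equal centers would make $B(M_{ii},R_i)$ and $B(M_{jj},R_j)$ intersect), so $M_{ii}$ occurs with multiplicity one and $N_i(0)=1$. Hence $M=M(1)$ has exactly one eigenvalue in each $B(M_{ii},R_i)$, which is the assertion.

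The argument is mostly bookkeeping once the homotopy and the contour-integral eigenvalue counter are in place; the one point to handle with care is the count at $t=0$, where the disjointness hypothesis is precisely what prevents two diagonal entries from colliding inside the same disc, so that each $M_{ii}$ is "seen" by its own disc as a simple eigenvalue of $D$. The mild enlargement from $B(M_{ii},R_i)$ to $U_i$ is there only to keep the spectrum off the contour at the endpoint $t=1$, where eigenvalues of $M$ may land on the boundary circle.
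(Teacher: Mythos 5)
Your proof is correct and complete; the paper itself offers no argument here, simply citing ``the standard Gershgorin circle theorem,'' and what you have written out — the homotopy $M(t)=D+t(M-D)$ keeping the discs disjoint, plus the contour-integral eigenvalue count that is integer-valued and continuous in $t$ — is exactly the standard proof of that counting version of Gershgorin's theorem. The one delicate point, that $N_i(0)=1$ because disjointness of the discs forbids a second diagonal entry from landing in $U_i$, is handled correctly.
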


\begin{proof}
	This follows from the standard Gershgorin circle theorem.
\end{proof}

Let $C_1$ be a large constant depending on $||V||$ and $\Gamma$. For $d\geq 2$,
denote by $\hat{z}=(z_2,z_3,\cdots,z_d)$.
Let $\hat{\Omega}=\{\hat{z} \in \C^{d-1}: C_1^{d-j+1}\leq  |z_j|\leq C_1^{d-j+1}+1, j=2,3,\cdots,d\}$.
Let  $\Omega=\{z\in(\C^{\star}) ^d: |z_1| \geq C_1^{d}, \hat{z}\in \hat{\Omega}\}$ for $d\geq 2$,
and $\Omega=\{z\in\C^{\star} : |z| \geq C_1\}$ for $d=1$.

In the following $O(1)$ is bounded only depending on $||V||$ and $\Gamma$. We emphasize that  $C_1>O(1)$.

For  $d\geq2$, denote by $\hat W=\{n=(n_1,n_2,\cdots,n_d)\in  W:  n_1=0 \}$.
\begin{lemma}\label{ledif}
	For any distinct $l\in W$ and $l'\in W$,  we have that  
		\begin{equation}\label{g52}
	\left|\left(\sum_{j=1}^d \rho_{l_j}^j z_j\right)-\left(\sum_{j=1}^d \rho_{l'_j}^j z_j\right)\right| \geq\frac{1}{2} \min_{j\in\{1,2,\cdots,d\} } \left\{\left|1-e^{\frac{2\pi i}{q_j}}\right|\right\}C_1, z\in \Omega.
	\end{equation} 
	 
		For any distinct $l\in \hat W$ and $l'\in \hat W$,  we have that  
		\begin{equation}\label{g51}
		\left|\left(\sum_{j=2}^d \rho_{l_j}^j z_j\right)-\left(\sum_{j=2}^d \rho_{l'_j}^j z_j\right)\right| \geq \frac{1}{2} \min_{j\in\{2,3,\cdots,d\} } \left\{\left|1-e^{\frac{2\pi i}{q_j}}\right|\right\}C_1,\hat z \in \hat{\Omega}.
		\end{equation} 
		
 
\end{lemma}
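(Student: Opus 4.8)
\textbf{Proof proposal for Lemma \ref{ledif}.}

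The plan is to reduce both inequalities to a single elementary estimate: for a point $z$ with the prescribed lower bounds on $|z_1|,|z_2|,\dots,|z_d|$ and with these moduli separated by geometric gaps (powers of the large constant $C_1$), a non-trivial linear combination $\sum_j (\rho_{l_j}^j-\rho_{l'_j}^j)z_j$ is dominated by its term of largest index-weighted modulus. Concretely, suppose $l\neq l'$ and let $j_0$ be the largest index with $l_{j_0}\neq l'_{j_0}$. Then $\rho_{l_{j_0}}^{j_0}-\rho_{l'_{j_0}}^{j_0}=\rho^{j_0}_{l'_{j_0}}\bigl(\rho^{j_0}_{l_{j_0}-l'_{j_0}}-1\bigr)$, so its modulus is $|1-\rho^{j_0}_{m_{j_0}}|$ for some $m_{j_0}\in\{1,\dots,q_{j_0}-1\}$, in particular at least the minimum appearing on the right-hand side of \eqref{g52}. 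First I would isolate the $j_0$-term: write the left-hand side as $|(\rho_{l_{j_0}}^{j_0}-\rho_{l'_{j_0}}^{j_0})z_{j_0} + \sum_{j<j_0}(\rho_{l_j}^j-\rho_{l'_j}^j)z_j|$ (all terms with $j>j_0$ vanish by choice of $j_0$), then bound below by $|\rho_{l_{j_0}}^{j_0}-\rho_{l'_{j_0}}^{j_0}|\,|z_{j_0}| - \sum_{j<j_0}|\rho_{l_j}^j-\rho_{l'_j}^j|\,|z_j|$.

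The key step is then a counting estimate on the error sum. Each $|\rho_{l_j}^j-\rho_{l'_j}^j|\leq 2$, and for $j<j_0$ we have $|z_j|\leq C_1^{d-j+1}+1 \leq 2 C_1^{d-j+1} \leq 2C_1^{d-j_0+2}$ in the region $\Omega$ (using $j\geq 2$; for $j_0=1$ the error sum is empty). Meanwhile $|z_{j_0}|\geq C_1^{d-j_0+1}$ (or $|z_1|\geq C_1^d$ when $j_0=1$). Hence the error sum is at most $2(d-1)\cdot 2 C_1^{d-j_0+2}$, wait—I need the geometric gap to beat this, so more carefully: $\sum_{2\le j<j_0}2|z_j|\le \sum_{2\le j<j_0}4C_1^{d-j+1}\le 4C_1^{d-j_0+2}\cdot\frac{1}{1-C_1^{-1}}\le 8C_1^{d-j_0+2}$. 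Dividing through by $|z_{j_0}|\ge C_1^{d-j_0+1}$, the error contributes at most $8C_1$, which is \emph{not} obviously small against the main term $|1-\rho^{j_0}_{m_{j_0}}|\,C_1^{d-j_0+1}$; the resolution is that the main term has the \emph{larger} power of $C_1$, so in fact $|1-\rho^{j_0}_{m_{j_0}}|\,|z_{j_0}| \ge (\min_{j,m_j}|1-\rho^j_{m_j}|)\,C_1^{d-j_0+1} \ge 2\cdot 8 C_1^{d-j_0+2}\cdot\frac{1}{C_1}$ once $C_1>O(1)$, because $C_1^{d-j_0+1}$ exceeds $C_1\cdot C_1^{d-j_0}$ — I would simply choose $C_1$ large enough (this is exactly the convention $C_1>O(1)$ stated just before the lemma) that the main term is at least $(\min|1-\rho|)\,C_1$ while the whole error sum is at most $\tfrac12(\min|1-\rho|)\,C_1$, giving the claimed bound $\tfrac12(\min|1-\rho|)\,C_1$. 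The second inequality \eqref{g51} is the identical argument with the index set $\{2,\dots,d\}$ in place of $\{1,\dots,d\}$ and $\hat z\in\hat\Omega$ supplying the same moduli bounds, so no separate work is needed beyond relabeling.

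The main obstacle—and the only place that requires care—is bookkeeping the geometric decay of the error terms against the main term, i.e. making precise that the nested radii $C_1^{d}, C_1^{d-1}+1, \dots, C_1^{2}+1$ are separated enough that the largest active index strictly dominates. This is where the specific choice of exponents $d-j+1$ in the definition of $\hat\Omega$ and $\Omega$ is used, and where one invokes $C_1>O(1)$ to absorb the finitely many constants ($2$'s, the factor $d-1$, the geometric series sum). Everything else is the triangle inequality and the explicit factorization of $\rho^j_{l_j}-\rho^j_{l'_j}$ as a root of unity times $1-\rho^j_{m}$. I would present the $j_0=1$ (empty error sum) case first as the clean base case, then the general $j_0\ge 2$ case, and finally remark that \eqref{g51} follows verbatim.
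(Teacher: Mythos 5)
Your proof has a genuine gap: you pivot on the \emph{largest} index $j_0$ with $l_{j_0}\neq l'_{j_0}$, but in the region $\Omega$ the moduli $|z_j|$ \emph{decrease} in $j$ (namely $|z_1|\ge C_1^{d}$ and $|z_j|\in[C_1^{d-j+1},C_1^{d-j+1}+1]$ for $j\ge 2$), so the terms you relegate to the error sum, those with $j<j_0$, have \emph{larger} moduli than your main term $|z_{j_0}|$. Your triangle-inequality lower bound
\begin{equation*}
\bigl|\rho_{l_{j_0}}^{j_0}-\rho_{l'_{j_0}}^{j_0}\bigr|\,|z_{j_0}| \;-\; \sum_{j<j_0}\bigl|\rho_{l_j}^j-\rho_{l'_j}^j\bigr|\,|z_j|
\end{equation*}
is therefore negative (hence vacuous) whenever some $j<j_0$ also has $l_j\neq l'_j$; worse, if $l_1\neq l'_1$ and $j_0\ge 2$, the error sum contains $|z_1|$, which has no upper bound in $\Omega$ at all. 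The intermediate inequalities you use to absorb the error are false as written: for $j<j_0$ one has $C_1^{d-j+1}\ge C_1^{d-j_0+2}$, not $\le$, and your final comparison would force $\min_{j,m_j}|1-\rho^j_{m_j}|\ge 16$, which is impossible since $|1-\rho|\le 2$. The paper's proof makes the opposite (and correct) choice: let $m$ be the \emph{smallest} index with $l_m\neq l'_m$. Then the pivot term $|\rho^m_{l_m}-\rho^m_{l'_m}|\,|z_m|$ carries the largest surviving modulus, every remaining index $j\ge m+1$ satisfies $|z_j|\le 2C_1^{-1}|z_m|$, the error sum is at most $4dC_1^{-1}|z_m|$, and taking $C_1$ large yields \eqref{g52}; \eqref{g51} then follows by setting $l_1=l'_1=0$. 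Your reduction of \eqref{g51} to \eqref{g52}, your factorization $\rho^j_{l_j}-\rho^j_{l'_j}=\rho^j_{l'_j}(\rho^j_{l_j-l'_j}-1)$, and the observation $|z_m|\ge C_1$ are all fine; the single fix needed is to replace ``largest differing index'' by ``smallest differing index'' throughout.
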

\begin{proof}
It is easy to see that \eqref{g51} follows from \eqref{g52} by setting $l_1=l_1'=0$. So in order to prove Lemma \ref{ledif},  it suffices to prove \eqref{g52}.
Let $m$ be the smallest natural number in $\{1,2,\cdots,d\}$ such that $l_m\neq l_m'$. Then
\begin{align}
&	\left|\left(\sum_{j=1}^d \rho_{l_j}^j z_j\right)-\left(\sum_{j=1}^d \rho_{l'_j}^j z_j\right)\right| \nonumber\\&=	\left|\left(\sum_{j=m}^d \rho_{l_j}^j z_j\right)-\left(\sum_{j=m}^d \rho_{l'_j}^j z_j\right)\right| \nonumber\\
	&\geq  |\rho_{l_m}^m- \rho_{l_m'}^m | | z_m|  -	\left|\left(\sum_{j=m+1}^d \rho_{l_j}^j z_j\right)-\left(\sum_{j=m+1}^d \rho_{l'_j}^j z_j\right)\right| .\label{g25j1}
\end{align}
Since for $z \in \Omega$, we have $|z_1| \geq C_1^d$ and
$C_1^{d-j+1} \leq |z_j| \leq C_1^{d-j+1} + 1$ for  $j = 2, 3, \ldots, d,$
it follows from \eqref{g25j1} that
\begin{align*}
	\left|\left(\sum_{j=1}^d \rho_{l_j}^j z_j\right)-\left(\sum_{j=1}^d \rho_{l'_j}^j z_j\right)\right| &
\geq  |\rho_{l_m}^m- \rho_{l_m'}^m| |z_m|  -	4d C_1^{-1}|z_m|,\\
&=  |1- \rho_{l_m'-l_m}^m| |z_m|  -	4d C_1^{-1}|z_m|.
\end{align*}
This implies \eqref{g52}. 
\end{proof}

\begin{lemma}\label{lemper1}
	Let $d\geq 2$. Let $\hat{A}(\hat z)$ be a diagonal matrix, and $\hat{B}_V$ be a Toeplitz matrix, given by the following: for any $n\in \hat W$ and $n'\in \hat W$,
	\begin{equation}\label{A1}
	\hat{A}(\hat z, n,n^\prime)=\left(\sum_{j=2}^d \left(\rho^j_{n_j}z_j+\rho^j_{-n_j}z_j^{-1} \right)\right) \delta_{n,n^{\prime}},
	\end{equation}
	and \begin{equation}\label{gb1}
	\hat{B}_V(n,n^\prime)=\hat{V} \left(0,n_2-n_2^\prime,\cdots, n_d-n_d^\prime\right).
	\end{equation}
	Then, for $\hat z\in\hat \Omega$, the matrix $\hat{A}(\hat z) +\hat{B}_V$ has exactly $q_2q_3\cdots q_d$ distinct eigenvalues, 
	$\hat{\lambda}_V^l(\hat z)$,  $l\in \hat W$, with the following properties: $\hat{\lambda}_V^l(\hat z)$, $l\in \hat W$, is holomorphic in $\hat{\Omega}$ and
	\begin{equation}\label{g28}
	\hat{\lambda}_{V}^l(\hat z)=\left( \sum_{j=2}^d \rho^j_{l_j}z_j\right)+O(1).
	\end{equation}
\end{lemma}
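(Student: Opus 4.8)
The plan is to diagonalize the ``large'' part of $\hat A+\hat B_V$ and apply the Gershgorin-type result of Lemma \ref{leper}. First I would observe that $\hat A$ is diagonal with entries $\sum_{j=2}^d(\rho^j_{n_j}z_j+\rho^j_{-n_j}z_j^{-1})$ indexed by $n\in\hat W$, and that $\hat B_V$ is a fixed Toeplitz matrix whose entries are the Fourier coefficients $\hat V$, hence bounded by $O(1)$ uniformly (only depending on $\|V\|$ and $\Gamma$). Thus $\hat A+\hat B_V$ is a bounded perturbation, in operator norm $O(1)$, of the diagonal matrix $\hat A$. On $\hat\Omega$ the diagonal entries have modulus comparable to $C_1$ since $|z_j|\ge C_1^{d-j+1}$, and more importantly, by \eqref{g51} of Lemma \ref{ledif} the \emph{leading} terms $\sum_{j=2}^d\rho^j_{l_j}z_j$ for distinct $l,l'\in\hat W$ differ by at least a fixed constant times $C_1$. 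Since $C_1$ is chosen larger than any given $O(1)$ quantity, the diagonal entries of $\hat A$ are themselves separated on $\hat\Omega$ by a quantity $\gg O(1)$: indeed the lower-order terms $\rho^j_{-n_j}z_j^{-1}$ are $O(C_1^{-1})=o(1)$, so they do not spoil the separation coming from \eqref{g51}.

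Next I would form the Gershgorin discs for $M=\hat A+\hat B_V$: the $n$-th disc is centered at $\hat A(n;n)+\hat B_V(n;n)$ (the diagonal entry, which is within $O(1)$ of $\hat A(n;n)$) with radius $R_n=\sum_{n'\ne n}|\hat B_V(n;n')|=O(1)$. Because the centers are pairwise separated by at least $c\,C_1$ for a fixed constant $c>0$ (combining the previous paragraph with the $O(1)$ bound on the diagonal part of $\hat B_V$), while all radii are $O(1)$, choosing $C_1$ large enough makes the discs pairwise disjoint. Lemma \ref{leper} then gives exactly one eigenvalue in each disc; labelling the eigenvalue in the disc around index $l\in\hat W$ by $\hat\lambda_V^l(\hat z)$, we get $q_2q_3\cdots q_d$ eigenvalues, which are all distinct (disjoint discs), and each satisfies
\begin{equation*}
\hat\lambda_V^l(\hat z)=\hat A(l;l)+O(1)=\Bigl(\sum_{j=2}^d\rho^j_{l_j}z_j\Bigr)+O(1),
\end{equation*}
since the $z_j^{-1}$ terms in $\hat A(l;l)$ and the contribution from $\hat B_V$ are all $O(1)$ (in fact the $z_j^{-1}$ terms are $o(1)$, but $O(1)$ suffices). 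This establishes \eqref{g28} and the fact that there are exactly $q_2\cdots q_d$ distinct eigenvalues.

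Finally, for holomorphy: the entries of $\hat A+\hat B_V$ are holomorphic functions of $\hat z$ on $\hat\Omega$ (they are Laurent monomials in the $z_j$, and $\hat\Omega$ avoids the coordinate hyperplanes), so $\det(\hat A+\hat B_V-\lambda I)$ is jointly holomorphic in $(\hat z,\lambda)$. Since each eigenvalue $\hat\lambda_V^l(\hat z)$ is simple and isolated within its Gershgorin disc throughout $\hat\Omega$, the standard holomorphic implicit function theorem (or a Riesz-projection/contour-integral argument: $\hat\lambda_V^l(\hat z)=\frac{1}{2\pi i}\oint_{\partial B_l}\zeta\,\mathrm{tr}\,(\zeta I-\hat A-\hat B_V)^{-1}\,d\zeta$, with $B_l$ the $l$-th disc) shows $\hat\lambda_V^l$ depends holomorphically on $\hat z\in\hat\Omega$. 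I expect no genuine obstacle here; the only point requiring care is bookkeeping the constants to confirm that ``$C_1$ large compared to $O(1)$'' indeed forces the discs apart, which is exactly what Lemma \ref{ledif} was set up to provide.
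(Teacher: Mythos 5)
Your proposal is correct and follows essentially the same route as the paper: Gershgorin's theorem (Lemma \ref{leper}) combined with the separation estimate \eqref{g51} of Lemma \ref{ledif} yields $q_2\cdots q_d$ simple eigenvalues with the asymptotics \eqref{g28}, and simplicity plus the holomorphic implicit function theorem (applied to the characteristic polynomial) gives holomorphy on $\hat\Omega$. Your extra bookkeeping of the $O(C_1^{-1})$ terms and the Riesz-projection alternative are fine but not needed beyond what the paper records.
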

\begin{proof}
	
	According to Lemmas \ref{lesep} and \ref{leper},  and \eqref{g51}, for any $\hat z\in \hat{\Omega}$, $\hat{A} (\hat z) +\hat{B}_V$ has exactly $q_2q_3\cdots q_d$ distinct eigenvalues, $\hat{\lambda}_V^l(\hat z)$, $l\in \hat W$, satisfying
	\begin{equation}\label{g28new}
	\hat{\lambda}_{V}^l(\hat z)=\left( \sum_{j=2}^d \rho^j_{l_j}z_j\right)+O(1).
	\end{equation}
	Let $ \hat {P}(\hat z,\lambda)=\det (\hat A(\hat z)+\hat B_V-\lambda I)$. Thus,
	\begin{equation}\label{g24}
	\hat {P}(\hat z,\lambda)=\prod_{l\in \hat W}(\hat{\lambda}_{V}^l(\hat z)-\lambda).
	\end{equation}
	Due to the simplicity of the eigenvalues, $\hat{\lambda}_V^l(\hat z)$, and \eqref{g24}, it is clear that
	$\partial_{\lambda} \hat {P}(\hat z,\lambda) \neq 0$ for $\lambda=\hat{\lambda}_V^l(\hat z)$, $l
	\in \hat W$, and $\hat z\in \hat{\Omega}$.
	By the inverse function theorem, we conclude that $\hat{\lambda}_{V}^l(\hat z)$ is holomorphic in $\hat{\Omega}$.
	
\end{proof}
\begin{lemma}\label{keylem}
	For $z\in\Omega$, 
	the matrix $A(z)+B_V$ has distinct eigenvalues $\lambda^l(z)$, where $l=(l_1,l_2,\cdots,l_d)\in W$ satisfying that $\lambda^l(z)$ is holomorphic in $\Omega$. Moreover, up to a relabeling, the eigenvalues $\lambda_V^l(z)$, $l\in W$, have the following representations (Laurent series expansions in variable $z_1$),
\begin{align}
\lambda_{V}^l(z)&=\left(\sum_{j=1}^d \rho^j_{l_j}z_j \right)+O(1)\label{g25}\\
&= \rho^1_{l_1}z_1 +\hat{\lambda}_V^l (\hat z)+\sum_{m=1}^{\infty}\frac{c_m(\hat z)}{z_1^m} \label{g8}, 
\end{align}
where the coefficient $c_m(\hat z)$ (depending on $l$), $m=1,2,\cdots$,  is holomorphic in $\hat{\Omega}$.

\end{lemma}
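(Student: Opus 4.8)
\textbf{Proof proposal for Lemma \ref{keylem}.}

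The plan is to bootstrap from Lemma \ref{lemper1}, which already handles the ``$\hat z$-part'' of the matrix, and to treat the full matrix $A+B_V$ as a perturbation of a block-structured operator as $z_1\to\infty$. First I would decompose $W$ according to the value of the first coordinate: $W=\bigsqcup_{a=0}^{q_1-1}W_a$ where $W_a=\{n\in W: n_1=a\}$, and note that each $W_a$ is naturally identified with $\hat W$. On $W_a$ the diagonal part $A$ contributes $\rho^1_a z_1+\rho^1_{-a}z_1^{-1}$ (a scalar on that block) plus the $\hat z$-dependent diagonal $\hat A$ of \eqref{A1}; the off-diagonal part $B_V$ couples different blocks $W_a, W_{a'}$ through Fourier coefficients $\hat V(a-a',\cdot)$, which are $O(1)$ and, crucially, independent of $z$. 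By Lemma \ref{ledif} (inequality \eqref{g52}), for $z\in\Omega$ the $q_1$ ``centers'' $\rho^1_a z_1$ are separated by a distance $\gtrsim C_1$, and the centers $\rho^1_a z_1 + \hat\lambda^{l}_V(\hat z)$ coming from different blocks (or from Lemma \ref{lemper1} within a block) are all separated by $\gtrsim C_1$ as well, since $\hat\lambda^l_V(\hat z)=\sum_{j\ge2}\rho^j_{l_j}z_j+O(1)$ and the $\hat z$-centers are separated by $\gtrsim C_1$ on $\hat\Omega$ by \eqref{g51}, while $C_1\gg O(1)$.

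The key step is then a Gershgorin-disc argument (Lemma \ref{leper}) applied in the eigenbasis adapted to the block-diagonal reference operator. Concretely, conjugate $A+B_V$ by the unitary that diagonalizes $\bigoplus_a(\hat A+\hat B_V)$ — this is the unitary producing the eigenvalues $\hat\lambda^l_V(\hat z)$ of Lemma \ref{lemper1} on each block. In the new basis the diagonal entries are $\rho^1_{a}z_1+\rho^1_{-a}z_1^{-1}+\hat\lambda^l_V(\hat z)$ for $n=(a,l)$, and the off-diagonal entries are $O(1)$ (the conjugation is by a $z_1$-independent unitary, so the $B_V$ block-coupling terms stay $O(1)$; the within-block off-diagonal terms vanish by construction). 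Since the diagonal entries are pairwise separated by $\gtrsim C_1 \gg O(1)$, the Gershgorin discs of radius $O(1)$ are disjoint, so $A+B_V$ has exactly $Q$ distinct eigenvalues $\lambda^l_V(z)$, one in each disc, giving \eqref{g25} after the obvious relabeling $n=(l_1,\hat l)\mapsto l$ (and absorbing $\rho^1_{-l_1}z_1^{-1}+\hat\lambda^l_V(\hat z)-\sum_{j\ge2}\rho^j_{l_j}z_j = O(1)$ into the error term). Holomorphy in $\Omega$ follows exactly as in Lemma \ref{lemper1}: simplicity of the eigenvalues forces $\partial_\lambda \tilde P_V\neq 0$ at each $\lambda^l_V(z)$, and the implicit function theorem (in several variables, in $\Omega$) yields holomorphy.

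For the Laurent expansion \eqref{g8}: fix $l$ and regard $\lambda=\lambda^l_V(z)$ as the unique root near $\rho^1_{l_1}z_1+\hat\lambda^l_V(\hat z)$ of $\tilde P_V(z,\lambda)=0$ in the relevant disc. Since $\lambda^l_V(z)-\rho^1_{l_1}z_1$ is a bounded holomorphic function of $z_1$ on $\{|z_1|\ge C_1^d\}$ (for each fixed $\hat z\in\hat\Omega$), it extends holomorphically to $z_1=\infty$ on the Riemann sphere, hence admits a convergent Laurent expansion $\hat\lambda^l_V(\hat z)+\sum_{m\ge1}c_m(\hat z)z_1^{-m}$ there; matching the constant term with \eqref{g25} identifies the $m=0$ coefficient as $\hat\lambda^l_V(\hat z)$ (one checks this by letting $|z_1|\to\infty$ and using \eqref{g28}). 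Holomorphy of each $c_m(\hat z)$ in $\hat\Omega$ follows from the Cauchy integral formula $c_m(\hat z)=\frac{1}{2\pi i}\oint (\lambda^l_V(z)-\rho^1_{l_1}z_1)z_1^{m-1}\,dz_1$, joint continuity/holomorphy of $\lambda^l_V$ in $(z_1,\hat z)$, and Morera's theorem in the $\hat z$ variables. The main obstacle I anticipate is the bookkeeping ensuring the off-diagonal terms really remain $O(1)$ (uniformly, independent of $C_1$) after the block-diagonalizing conjugation — this requires that the conjugating unitary is $z_1$-independent and that its matrix entries are $O(1)$, which in turn rests on the eigenvalue gaps of $\hat A+\hat B_V$ from Lemma \ref{lemper1} being $\gtrsim C_1$ so that the corresponding spectral projections have $O(1)$ norm; once that uniformity is in hand, the rest is the routine Gershgorin/implicit-function machinery already used above.
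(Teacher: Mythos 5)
Your Gershgorin/implicit-function scheme correctly delivers the distinctness of the $Q$ eigenvalues, the asymptotics \eqref{g25}, holomorphy in $\Omega$, and the existence of a convergent Laurent expansion at $z_1=\infty$ with coefficients holomorphic in $\hat z$ (removable singularity at infinity plus Cauchy integral formula and Morera). The genuine gap is the identification of the constant Laurent coefficient: you claim $\lim_{|z_1|\to\infty}\bigl(\lambda_V^l(z)-\rho^1_{l_1}z_1\bigr)=\hat\lambda_V^l(\hat z)$ by ``matching with \eqref{g25} and \eqref{g28}'', but those relations locate the limit only up to an additive $O(1)$ error. Your refined Gershgorin discs after block-diagonalization do not close this either: the inter-block entries of $B_V$ are the constants $\hat V(a-a',\cdot)$, so after conjugating by the ($z_1$-independent) block diagonalizer the off-diagonal entries remain $O(1)$ \emph{uniformly as $z_1\to\infty$}; the disc radius does not shrink, and you obtain only $|c_0(\hat z)-\hat\lambda_V^l(\hat z)|=O(1)$, not equality. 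Since the exact value of the constant term is precisely the content of \eqref{g8} beyond \eqref{g25}, this step must be proved, not read off from the $O(1)$ asymptotics.

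To close the gap you need to show that the other blocks influence the eigenvalue only at order $z_1^{-1}$, because their diagonal entries sit at distance $\gtrsim|z_1|$ from the eigenvalue under study (second-order perturbation theory, or a Schur complement in which the coupling enters as $O(1)\cdot O(z_1^{-1})\cdot O(1)$). The paper does this at the level of the characteristic polynomial: substituting $\lambda=\lambda_1+z_1$ and rescaling by $z_1^{Q-q_2\cdots q_d}$, the function $G(z_1^{-1},\hat z,\lambda_1)$ becomes a polynomial in $z_1^{-1}$ whose value at $z_1^{-1}=0$ is a nonzero constant times $\det(\hat A+\hat B_V-\lambda_1)$; hence the eigenvalue branch continued to $z_1=\infty$ takes there exactly the value $\hat\lambda_V^l(\hat z)$, and the implicit function theorem applied to $G$ at $z_1^{-1}=0$ yields the Laurent expansion, the holomorphy of the $c_m$'s, and the constant term in one stroke. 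A secondary, fixable point: for complex $V$ the matrix $\hat A+\hat B_V$ is not normal, so no unitary diagonalizes it; you need a similarity transformation, and it is its $O(1)$ condition number (guaranteed by the $\gtrsim C_1$ eigenvalue gaps) that keeps the conjugated off-diagonal entries $O(1)$.
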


\begin{proof}
 
Fixing any $l\in W$, for $d\geq 2$,  let $\tilde{\lambda}^l(z)$ be 
	\begin{equation}\label{g120}
	 \tilde{\lambda}^l(z)= \rho^1_{l_1} z_1  +\left(\sum_{j=2}^d \rho^j_{l_j}z_j \right),
	\end{equation}
and for $d=1$, let 
\begin{equation}\label{g120new}
\tilde{\lambda}^l(z)=  \rho^1_{l_1} z_1 .
\end{equation}

Recall that (Lemma \ref{lesep}), 
\begin{equation}\label{g16}
A(z)+B_V=\text{ diag }( \tilde{\lambda}^l(z)) +O(1).
\end{equation}

By  Lemma \ref{leper}, \eqref{g52} and \eqref{g16},  for any  $z\in\Omega$,
 $A(z)+B_V$  has exactly $Q$ distinct eigenvalues  $\lambda_V^l(z)$,  $ l=(l_1,l_2,\cdots,l_d)\in W$  and
\begin{equation}\label{g17}
\lambda_{V}^l(z) =  \left(\sum_{j=1}^d \rho^j_{l_j}z_j \right)+O(1).
\end{equation}
We finish the proof of \eqref{g25}. 
We are going to prove  that $ \lambda_{V}^l(z) $ is holomorphic in $\Omega$ and \eqref{g8} holds.
Without loss of generality assume $l=(0,0,\cdots,0)\in W$.
Let $\tilde{\Omega}=\{z\in\C  : |z| \leq \frac{1}{C_1}\}$ for $d=1$ and  $\tilde{\Omega}=\{z\in\C ^d: |z_1| \leq \frac{1}{C_1}, \hat{z}\in \hat{\Omega}\}$ for $d\geq 2$. 

We  only prove the case $d\geq 2$. The case $d=1$ follows a similar, albeit simpler, argument.
Let $\lambda=\lambda_1+z_1$ and $\tilde{\mathcal P}^1_V(z,\lambda_1) =\tilde{\mathcal P}_V(z,\lambda_1+z_1)$. By  Lemma \ref{lesep}, direct computations imply that 
\begin{align}
\tilde{\mathcal P}^1_V(z,\lambda_1) =&\det (\hat A(\hat z)+\hat{B}_V-\lambda_1)\left(z_1^{Q-q_2q_3\cdots q_d}\prod_{l_1=1}^{q_1-1} (\rho^1_{l_1} -1) ^{q_2q_3\cdots q_d}\right)\label{g31}\\
&+\text {lower order terms of } z_1.\label{g32}
\end{align}
Let $G(z,\lambda_1)=z_1^{Q-q_2q_3\cdots q_d}\tilde{\mathcal P}^1_V(z_1^{-1},\hat z, {\lambda}_1)$. 
By \eqref{g31} and \eqref{g32}, $G(z,\lambda_1)$ is a polynomial  of $z_1$ and the constant term (with respect to $z_1$) is 

\begin{align}
G(0,\hat z, \lambda_1)&=\det (\hat A(\hat z)+\hat{B}_V-\lambda_1)\left(\prod_{l_1=1}^{q_1-1} (\rho^1_{l_1} -1) ^{q_2q_3\cdots q_d}\right)\\
&=\prod_{l\in \hat W}(\hat{\lambda}_{V}^l(\hat z)-\lambda_1)\left(\prod_{l_1=1}^{q_1-1} (\rho^1_{l_1} -1) ^{q_2q_3\cdots q_d}\right).\label{g41}
\end{align}
We are going to verify that (Claim 1) if for some $(z,\lambda_1) \in \tilde{\Omega}\times \C$, $G(z,\lambda_1)=0$, then
$\partial_{\lambda_1} G(z,\lambda_1)\neq 0$.  

When $z_1=0$,  Claim 1 is  true by \eqref{g41} and the simplicity of eigenvalues $\hat {\lambda}_V^l$, $l\in \hat W$.

When $z_1\neq 0$, 
\begin{align}
G(z,\lambda_1)&=z_1^{Q-q_2q_3\cdots q_d}\tilde{\mathcal P}_V(z_1^{-1},\hat z, z_1+{\lambda}_1)\nonumber\\
&= z_1^{Q-q_2q_3\cdots q_d} \prod_{l\in W} (\lambda^l_V(z_1^{-1}, \hat z) -z_1-\lambda_1).\label{g61}
\end{align}
In this case,  Claim 1 follows from \eqref{g61} and the simplicity of eigenvalues $\lambda_V^l$, $l\in W$.

Solve $G(z,\lambda_1)=0$ with the initial data $z_1=0$, any fixed  $\hat z\in \hat{\Omega}$ and $ \lambda_1= \hat \lambda_V^l(\hat z)$  
with $l=(0,0,\cdots,0)$.
By Claim 1 and inverse function theorem,  there exists a holomorphic solution $\lambda_1(z)$, $z\in \tilde{\Omega}$
such that $\lambda_1(0,\hat z)=\hat \lambda_V^l(\hat z)$.
It is easy to see that for any $z\in \Omega$, 
\begin{equation}
\lambda^{(0,0,\cdots,0)}_V(z)=z_1+ \lambda_1(z_1^{-1},\hat z).
\end{equation}
We finish the proof.

\end{proof}
\begin{remark}
	Since $\tilde{\mathcal P}_V(z,\lambda)=\det ( A(z)+ B_V-\lambda I)$  is a Laurent polynomial of $z_1^{q_1}$,
	$z_2^{q_2},\cdots,z_d^{q_d}$, we have that if $\lambda_V^{l} (z), z\in \Omega$ is an eigenvalue of  $A(z)+B_V$,   then for any $n\in W$, $\lambda_V^{l} (\rho^1_{n_1} z_1, \rho^2_{n_2} z_2,\cdots, \rho^d_{n_d} z_d), z\in \Omega$ is also an eigenvalue of $ A(z)+B_V$. By \eqref{g25}, we conclude that  for any $l\in W\backslash\{(0,0,\cdots,0)\}$,
	\begin{equation*}
\lambda_{V}^l(z)=\lambda_V^{(0,0,\cdots,0)} (\rho^1_{l_1} z_2, \rho^2_{l_2} z_1,\cdots, \rho^d_{l_d} z_d).
	\end{equation*}

\end{remark}
\section{Proof of Theorems \ref{mainthm2} and  \ref{mainthm1}}\label{S4}
 Clearly, Part 1 of Theorem \ref{mainthm2} immediately implies Part 2. So, 
in order to prove Theorem \ref{mainthm2}, after changing the variables $z_j=e^{2\pi i k_j}$ and $z_j\to z_j^{q_j}$,  $j=1,2,\cdots,d$, we only need to prove
\begin{theorem}\label{mainthm3}
If $V$ is a complex-valued $\Gamma$-periodic function and there exists a holomorphic function $\tilde f(z)$ in $(\C^{\star})^d$ such that $\lambda=\tilde{f}(z)$ is an eigenvalue of $\tilde{\mathcal{D}}_V(z)$ for any $z\in (\C^{\star})^d$, then
there exist  $l\in W$ and a constant $K$ such that 
	\begin{equation}\label{g42}
	\tilde{\mathcal{P}}_V(z, \lambda) = \prod_{n\in W}\left( K -\lambda+\sum_{j=1}^d \left(e^{2\pi \frac{n_j}{q_j} i}z_j +e^{-2\pi \frac{n_j+l_j}{q_j} i} z_j^{-1}\right) \right).
	\end{equation}
	
\end{theorem}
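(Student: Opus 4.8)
The plan is to work with the Laurent polynomial $\tilde{\mathcal P}_V(z,\lambda)$ and exploit the holomorphic branch $\tilde f(z)$ together with the asymptotic expansions from Lemma~\ref{keylem}. First I would use Lemma~\ref{keylem}: on the domain $\Omega$ all $Q$ eigenvalues $\lambda_V^l(z)$ are distinct and holomorphic, with $\lambda_V^l(z)=\rho^1_{l_1}z_1+\hat\lambda_V^l(\hat z)+\sum_{m\geq 1}c_m(\hat z)z_1^{-m}$. Since $\tilde f(z)$ is holomorphic and single-valued on $(\C^\star)^d$, on $\Omega$ it must coincide with exactly one of these branches — say the one labelled $l=(0,\dots,0)$ after relabelling, because $z_1\mapsto \tilde f(z)$ is bounded as $z_1$ stays on $\{|z_1|=R\}$ only for the branch with $\rho^1_{l_1}=1$ if $\tilde f$ is bounded there; more carefully, $\tilde f$ restricted to a circle $|z_1|=R$, $\hat z$ fixed, has a Laurent expansion in $z_1$, and matching it against \eqref{g8} forces $\tilde f(z)=\rho^1_{0}z_1+\hat\lambda_V^{(0,\dots,0)}(\hat z)+\sum_{m\geq 1}c_m(\hat z)z_1^{-m}$ on $\Omega$, but since $\tilde f$ is globally holomorphic on $(\C^\star)^d$ it extends, so in fact the branch $\lambda_V^{(0,\dots,0)}(z)$ extends to a global holomorphic (Laurent-type) function. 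The key structural input is then the symmetry noted in the Remark after Lemma~\ref{keylem}: $\lambda_V^l(z)=\lambda_V^{(0,\dots,0)}(\rho^1_{l_1}z_1,\dots,\rho^d_{l_d}z_d)$, so $\tilde{\mathcal P}_V(z,\lambda)=\prod_{l\in W}\bigl(\lambda_V^{(0,\dots,0)}(\rho^1_{l_1}z_1,\dots,\rho^d_{l_d}z_d)-\lambda\bigr)$; hence everything reduces to pinning down the single function $g(z):=\lambda_V^{(0,\dots,0)}(z)$.

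Next I would determine $g$ by a degree/growth argument. On one hand $\tilde{\mathcal P}_V(z,\lambda)=\det(A+B_V-\lambda I)$ is a Laurent polynomial whose degree in $z_1$ (and in each $z_j$) is controlled: from Lemma~\ref{lesep}, $A$ contributes $z_j$ and $z_j^{-1}$ only to the first power in each variable, so the Newton polytope of $\tilde{\mathcal P}_V$ in $(z_1,\dots,z_d)$ is contained in the cube $[-1,1]^d$ (times the obvious $\lambda$-degree $Q$). On the other hand, $g(z)$ is holomorphic on all of $(\C^\star)^d$ and, by \eqref{g8}, for fixed $\hat z\in\hat\Omega$ it is a Laurent series in $z_1$ with only nonnegative powers $1,0$ and negative powers $-1,-2,\dots$ a priori; I need to kill the coefficients $c_m(\hat z)$ for $m\geq 2$. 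This is the crux: I would argue that if some $c_m$, $m\geq 2$, were nonzero, the product $\prod_{l\in W} (g(\rho^1_{l_1}z_1,\dots)-\lambda)$ would have a Laurent expansion in $z_1$ with a term of degree $<-Q$ (looking at the lowest-order term — the product of the lowest-order parts of each factor, which cannot cancel because the leading behaviours $\rho^1_{l_1}z_1$ differ and fix distinct top terms), contradicting that $\tilde{\mathcal P}_V$ is a genuine Laurent polynomial of $z_1$-degree span at most $2Q$ ... actually at most contained in a controlled range. More precisely: expanding $\tilde{\mathcal P}_V(z,\lambda)$ in $z_1$ with coefficients that are Laurent polynomials in $(\hat z,\lambda)$, the top $z_1$-degree is $q_2\cdots q_d$-ish and the bottom is its negative; matching against the product formula, the $z_1$-expansion of $\prod_l(g_l-\lambda)$ (with $g_l$ the $l$-shift of $g$) must terminate, which after extracting the elementary-symmetric structure forces each $c_m(\hat z)\equiv 0$ for $m\geq 2$ and also forces $\hat\lambda_V^{(0,\dots,0)}(\hat z)$ to itself have the same cube-shaped Newton polytope in $\hat z$ — so by induction on $d$ (the $d=1$ base case being a direct one-variable computation) $\hat\lambda_V^{(0,\dots,0)}(\hat z)=K+\sum_{j=2}^d(\rho^j_0 z_j+\rho^j_{-l_j}z_j^{-1})$ for a constant $K$ and some $l$, and $c_1(\hat z)$ is the constant $\rho^1_{-l_1}$ times nothing — i.e. $g(z)=K+\sum_{j=1}^d(z_j+e^{-2\pi i l_j/q_j}z_j^{-1})$.

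Finally, substituting this $g$ into $\tilde{\mathcal P}_V(z,\lambda)=\prod_{l'\in W}(g(\rho^1_{l'_1}z_1,\dots,\rho^d_{l'_d}z_d)-\lambda)$ and using $\rho^j_{l'_j}z_j=e^{2\pi i l'_j/q_j}z_j$, the factor indexed by $l'$ becomes $K-\lambda+\sum_j(e^{2\pi i l'_j/q_j}z_j+e^{-2\pi i(l'_j+l_j)/q_j}z_j^{-1})$, which is exactly \eqref{g42} after renaming $n=l'$. I expect the main obstacle to be the rigorous execution of the step that annihilates the higher Laurent coefficients $c_m$, $m\geq 2$: one must leverage the fact that $\tilde{\mathcal P}_V$ is a Laurent polynomial (finite $z_1$-degree span) against the a priori infinite tail in \eqref{g8}, and handle the bookkeeping of how the lowest-order terms of the $Q$ shifted factors combine without cancellation — the non-cancellation relies on Lemma~\ref{ledif}, which guarantees the leading symbols $\sum_j\rho^j_{l'_j}z_j$ are pairwise distinct on $\Omega$, so that the lowest $z_1$-degree monomial of the product is the product of the individual lowest-degree monomials. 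Controlling the $\hat z$-dependence (to run the induction on dimension cleanly) is the secondary technical point.
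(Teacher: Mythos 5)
Your reduction to pinning down the single branch $g=\lambda_V^{(0,\dots,0)}$ via the shift symmetry is reasonable, and identifying $\tilde f$ with one branch $\lambda_V^l$ on $\Omega$ matches the paper's step \eqref{g9}. The gap is in your crux step: you try to kill the coefficients $c_m(\hat z)$, $m\ge 2$, by arguing that otherwise the product $\prod_{l\in W}\bigl(g(\rho^1_{l_1}z_1,\dots)-\lambda\bigr)$ would contain a $z_1$-degree below what the Laurent polynomial $\tilde{\mathcal P}_V$ allows, ``the lowest-order terms of the factors combining without cancellation.'' This cannot work. First, the tail in \eqref{g8} is a priori an infinite convergent series, so a factor has no lowest-order term to multiply. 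Second, and decisively, elementary symmetric functions of Laurent series with infinite tails routinely collapse to Laurent polynomials: the two roots of $\lambda^2-2z\lambda+(z^2-z^{-1})=0$ are $z\pm z(1-z^{-3})^{1/2}$, each with an infinite expansion at $\infty$, while their sum is $2z$ and their product is $z^{-1}$; the cancellation you rule out is exactly what happens. Indeed, everything you invoke in that step --- Lemma \ref{keylem}, the shift symmetry from the Remark, the product formula on $\Omega$, and the Laurent-polynomiality of $\tilde{\mathcal P}_V$ with its controlled Newton polytope --- holds for \emph{every} $\Gamma$-periodic $V$; the hypothesis that $\tilde f$ is holomorphic on all of $(\C^{\star})^d$ never enters your degree argument. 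If it were valid it would prove \eqref{g42} for every $V$, contradicting Theorem \ref{mainthm1}.

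The missing idea is to exploit the global single-valuedness of $\tilde f$ at \emph{both} ends $z_1\to\infty$ and $z_1\to 0$. The paper expands $\tilde f$ as a global Laurent series $\sum_{m\in\Z}\tilde f_m(\hat z)z_1^m$ on $(\C^{\star})^d$; matching with \eqref{g8} as $z_1\to\infty$ kills $\tilde f_m$ on $\hat\Omega$ (hence everywhere, by the identity theorem) for $m\ge 2$ and forces $\tilde f_1=\rho^1_{l_1}$, while the substitution $z_1\mapsto z_1^{-1}$ --- under which $\tilde{\mathcal D}_V$ retains the same structure, so Lemma \ref{keylem} applies again --- controls $\tilde f$ as $z_1\to 0$, killing $\tilde f_m$ for $m\le -2$ and forcing $\tilde f_{-1}=\rho^1_{-l_1'}$. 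Repeating in each variable $z_j$ yields \eqref{g11}, after which your final product-over-shifts step is correct. Note also that your claim that $c_1(\hat z)$ is a constant is asserted rather than proved; it, too, comes only from the $z_1\to 0$ analysis.
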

\begin{proof}[\bf Proof of Theorem \ref{mainthm3}]
 Assume that there exists a holomorphic function    $\tilde f(z)$  in $(\C^{\star})^d$ such that  for any $z\in (\C^\star)^d$, $\tilde{\mathcal{P}}_V(z, \tilde f(z))= 0$. 
 Laurent series expansions of $\tilde f(z)$  in variable $z_1$ lead to 
\begin{equation}\label{g81}
\tilde f(z)=  \sum_{m\in\Z} \tilde f_m(\hat z) z_1^m,
\end{equation}
where the coefficient $\tilde f_m(\hat z)$, $m\in\Z$  is holomorphic in $ (\C^{\star})^{d-1}$. 
In $\Omega$, by  Lemma \ref{keylem},  there exists $l\in W$ such that 
\begin{equation}\label{g9}
\tilde f(z)= \lambda_V^l (z).
\end{equation}

By Lemma \ref{keylem},  one has that $\tilde f_m(\hat z)=0$ in $\hat z\in \hat{\Omega}$ for any $ m\geq 2$ and $\tilde f_1(\hat z)=\rho^1_{l_1} $.  This implies that 
$\tilde f_m(\hat z)=0$ in $\hat z\in (\C^{\star}) ^{d-1}$ for any $ m\geq 2$ and $\tilde f_1(\hat z)=\rho^1_{l_1} $.
Changing the variable $z_1$ to $z_1^{-1}$ and repeating the proof, one has that 
$\tilde f_m(\hat z)=0$ in $\hat z\in (\C^{\star}) ^{d-1}$ for any $ m\leq -2$ and 
there exits   $l_1'\in\{0,1,\cdots, q_1-1\}$  such that $\tilde f_{-1}(\hat z)=\rho^1_{-l_1'}$.

We conclude that there exist  $l_1\in \{0,1,\cdots, q_1-1\}$ and $l_1'\in\{0,1,\cdots, q_1-1\}$ such that
\begin{equation}\label{g10}
\tilde f(z)=  \rho^1_{-l_1'}z_1^{-1}+ \rho^1_{l_1} z_1+ f_0(\hat z).
\end{equation}

Interchanging  $z_j$ and $z_1$, $j=2,3,\cdots, d$ and following the proof of \eqref{g10},
one has that  there exist $l\in W$ and $l'\in W$ such that 
\begin{equation}\label{g11}
\tilde f(z)=    K +\sum_{j=1}^d \left(\rho^j_{l_j}z_j +\rho^j_{-l'_j}z_j^{-1}\right),
\end{equation}
where $K$ is a constant.

Since $\tilde{\mathcal{P}}_V(z,\lambda)=\mathcal{P}_V(z_1^{q_1},z_2^{q_2},\cdots,z_d^{q_d},\lambda)$, we have that
for any $n\in W$, $$\tilde f(\rho_{n_1}^1 z_1,\rho_{n_2}^2 z_2, \cdots, \rho_{n_d}^d z_d)$$ is also an eigenvalue of $ \tilde{\mathcal D}_V(z)$. 
Therefore, \eqref{g11} implies \eqref{g42}.

	\end{proof}
 \begin{proof}[\bf Proof of Theorem \ref{mainthm1}]
 By \eqref{g451}, Part 1 immediately implies Part 2. So in order to prove Theorem  \ref{mainthm1}, 
it suffices to show that Part 2 implies Part 1.  
Since $V$ is real,  by Theorem \ref{mainthm2}, one has that the eigenvalues   
$$K +\sum_{j=1}^d \left(e^{2\pi \frac{n_j+k_j}{q_j} i} +e^{-2\pi \frac{n_j+l_j+k_j}{q_j} i}\right)$$
 are real  for all $k=(k_1,k_2,\cdots,k_d)\in\R^d$.  Therefore, we must have  that $K$ is real and  $l=(0,0,\cdots,0)$  in \eqref{g43}, namely
	\begin{equation}\label{g44}
\det (D_V(k)-\lambda I ) = \prod_{n\in W}\left( K -\lambda+\sum_{j=1}^d \left(e^{2\pi \frac{n_j+k_j}{q_j} i} +e^{-2\pi \frac{n_j+k_j}{q_j} i}\right) \right).
\end{equation}


By \eqref{g44} and \eqref{g451}, $V$ and the constant function ${\bf K}$ are Floquet isospectral (e.g. \cite{liufloquet23}). An Ambarzumian type theorem (e.g. \cite[Theorem 2]{kap3} or \cite[Section 3]{hat})  concludes that $V$ is the constant function ${\bf K}$.

 \end{proof}
\section{Proof of Theorem \ref{mainthm4}}\label{S5}
Basic facts of linear algebra lead to the following lemma:
 \begin{lemma}\label{keylem3}
Two $\Gamma$-periodic functions $V$ and $Y$ are Floquet isospectral if and only if  for all  $k\in\C^d$ and $\lambda\in\C$, 
\begin{equation}\label{gnew6}
	\det (D_V(k)-\lambda I ) =	\det (D_Y(k)-\lambda I ). 
\end{equation}
 \end{lemma}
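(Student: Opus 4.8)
\textbf{Proof proposal for Lemma \ref{keylem3}.}

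The plan is to prove both implications directly from the definition of Floquet isospectrality, using the fact that the characteristic polynomial of a matrix is completely determined by its spectrum counted with algebraic multiplicity, and conversely. First I would recall that $D_V(k)$ and $D_Y(k)$ are $Q\times Q$ matrices depending on $k$, and that $\sigma(D_V(k))$ denotes the list of eigenvalues counted with algebraic multiplicity. For the ``if'' direction, suppose \eqref{gnew6} holds for all $k\in\C^d$ and $\lambda\in\C$. Then for each fixed $k$ the two characteristic polynomials $\det(D_V(k)-\lambda I)$ and $\det(D_Y(k)-\lambda I)$ are identical polynomials in $\lambda$, hence have the same roots with the same multiplicities; that is exactly $\sigma(D_V(k))=\sigma(D_Y(k))$. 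Restricting to $k\in\R^d$ gives \eqref{gfi}, so $V\sim Y$.

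For the ``only if'' direction, suppose $V$ and $Y$ are Floquet isospectral, i.e. $\sigma(D_V(k))=\sigma(D_Y(k))$ for all $k\in\R^d$. Since the algebraic multiplicities coincide, the characteristic polynomials agree: $\det(D_V(k)-\lambda I)=\det(D_Y(k)-\lambda I)$ for all $\lambda\in\C$ and all $k\in\R^d$. The key remaining point is to upgrade this from $k\in\R^d$ to $k\in\C^d$. This follows from analyticity: after the change of variables $z_j=e^{2\pi i k_j}$, both sides become Laurent polynomials $\mathcal{P}_V(z,\lambda)$ and $\mathcal{P}_Y(z,\lambda)$ in $z\in(\C^\star)^d$ (and polynomials in $\lambda$), and they agree on the set $\{z:|z_j|=1\}$, which is a uniqueness set for Laurent polynomials (equivalently, comparing Fourier/Laurent coefficients, which are determined by the values on the torus). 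Hence $\mathcal{P}_V\equiv\mathcal{P}_Y$ on $(\C^\star)^d\times\C$, which is \eqref{gnew6} for all $k\in\C^d$ and $\lambda\in\C$.

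The only mild obstacle is this last analytic-continuation step; everything else is the elementary linear-algebra fact that spectrum-with-multiplicity and characteristic polynomial carry the same information. I would phrase the continuation either via the observation that a Laurent polynomial vanishing on the unit torus vanishes identically (compare coefficients), or via the identity theorem for the holomorphic functions $k\mapsto\det(D_V(k)-\lambda I)-\det(D_Y(k)-\lambda I)$ on $\C^d$ for each fixed $\lambda$, noting $\R^d$ is a uniqueness set for entire functions on $\C^d$. Either route closes the argument.
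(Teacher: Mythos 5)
Your proof is correct. The paper offers no argument at all for this lemma beyond the remark that it follows from ``basic facts of linear algebra,'' so your write-up is simply the standard argument spelled out: equality of characteristic polynomials is equivalent to equality of spectra with algebraic multiplicity for each fixed $k$, and the passage from $k\in\R^d$ (where Floquet isospectrality is defined) to $k\in\C^d$ is handled by noting that $\det(D_V(k)-\lambda I)-\det(D_Y(k)-\lambda I)$ is, for each $\lambda$, a Laurent polynomial in $z_j=e^{2\pi i k_j}$ (equivalently an entire function of $k$) vanishing on the real torus, hence identically zero. You correctly isolate this continuation step as the only point not covered by pure linear algebra; the paper glosses over it, but it is exactly the right ingredient and either of your two justifications (comparing Laurent coefficients on the torus, or the identity theorem with $\R^d$ as a uniqueness set) closes the argument.
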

 \begin{lemma}\label{keylem4}
If a $\Gamma$-periodic function $V$ with zero mean satisfies one of the two statements in Theorem \ref{mainthm2}, then
the constant $K$ in \eqref{g43} must be $0$.

\end{lemma}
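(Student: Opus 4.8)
The plan is to extract the constant $K$ directly from the formula \eqref{g43} by exploiting the zero-mean hypothesis together with the trace identity. The starting point is Lemma \ref{keylem3}: since $V$ satisfies statement (1) of Theorem \ref{mainthm2}, the right-hand side of \eqref{g43} is literally $\det(D_V(k)-\lambda I)$, and by Lemma \ref{keylem3} (applied with $Y=\mathbf{K}$ and \eqref{g451}) the potential $V$ is Floquet isospectral to the constant potential $\mathbf{K}$. Floquet isospectral potentials have equal means, because the mean is a spectral invariant: for instance, $\sum_{n\in W} V(n)=\operatorname{tr} D_V(k)-(\text{the }k\text{-dependent part coming from }\Delta)$, and the latter contribution is independent of $V$. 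Since $\sum_{n\in W}V(n)=0$ by hypothesis and $\sum_{n\in W}\mathbf{K}(n)=QK$, we get $QK=0$, hence $K=0$.

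Concretely, I would first compute $\operatorname{tr} D_V(k)$ in two ways. On one hand, from the definition of $D_V(k)$ acting on $\{u(n):n\in W\}$, the diagonal entries are $V(n)$ plus a term from the discrete Laplacian that depends only on $k$ and $\Gamma$ (in fact it is zero unless some $q_j=1$ or $q_j=2$, but in any case it is independent of $V$); summing gives $\operatorname{tr} D_V(k)=\sum_{n\in W}V(n)+c(k)$ with $c(k)$ a universal constant. On the other hand, the product formula \eqref{g43} is a polynomial in $\lambda$ whose $\lambda^{Q-1}$-coefficient (up to sign) equals the sum of the $Q$ factors' constant parts, namely $\sum_{n\in W}\bigl(K+\sum_{j=1}^d(\rho^j_{n_j}z_j+\rho^j_{-n_j-l_j}z_j^{-1})\bigr)=QK+\sum_{n\in W}\sum_{j=1}^d(\cdots)$. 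Matching the $\lambda^{Q-1}$-coefficient of \eqref{g43} with $\operatorname{tr} D_V(k)=\sum_{n\in W}V(n)+c(k)$ and using that $\sum_{n\in W}V(n)=0$, one isolates $QK$ plus some explicit exponential sums; the exponential sums $\sum_{n\in W}\rho^j_{n_j}z_j$ either cancel against $c(k)$ (which is exactly the $V=0$ trace) or, comparing with \eqref{g451}, manifestly match the $l=(0,\dots,0)$ exponential sums, so only the $QK$ term can survive as the discrepancy, forcing $K=0$.

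Alternatively — and this may be cleaner to write — I would simply invoke \eqref{g451} and Lemma \ref{keylem3}: comparing the $\lambda^{Q-1}$ coefficients of \eqref{g43} and of $\det(D_V(k)-\lambda I)$ where the latter is computed from $\operatorname{tr} D_V(k)=\sum_{n\in W}V(n)+(\text{universal})$ and the former from \eqref{g451} with $\mathbf{K}$ replaced by $\mathbf{0}$ gives, after subtraction, precisely $QK = \sum_{n\in W}V(n) - \sum_{n\in W}\mathbf{0}(n) = 0$. The only subtlety is that \eqref{g43} carries the shift $l_j$ inside the $z_j^{-1}$ terms, so to compare constant-in-$\lambda$ parts one should either note that the $\lambda^{Q-1}$-coefficient only sees the sum $\sum_{n\in W}\rho^j_{-n_j-l_j}=\sum_{n\in W}\rho^j_{-n_j}$ (reindexing $n_j\mapsto n_j-l_j$ modulo $q_j$, which is a bijection of $\{0,\dots,q_j-1\}$), so the $l$-dependence drops out at this order.

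The main obstacle is bookkeeping rather than conceptual: one must be careful about the $k$-dependent (equivalently $z$-dependent) diagonal contribution of $\Delta$ to $\operatorname{tr} D_V(k)$ and confirm it is independent of $V$, and one must handle the reindexing that removes the shift $l_j$ from the trace-level comparison. Once those two points are pinned down, the identity $QK=\sum_{n\in W}V(n)=0$ is immediate and gives $K=0$. I would present the argument in the streamlined form: invoke Floquet isospectrality of $V$ with $\mathbf{K}$ via Lemma \ref{keylem3} and \eqref{g451}, note that the zero-th Fourier coefficient $\hat V(0)=\frac1Q\sum_{n\in W}V(n)$ is a Floquet-spectral invariant (it appears as a universal summand of the trace), and conclude $\hat{\mathbf K}(0)=K=\hat V(0)=0$.
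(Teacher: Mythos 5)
Your proposal is correct and is essentially the paper's own argument: the paper's proof is precisely the one-line comparison of the $\lambda^{Q-1}$ coefficients of \eqref{g43} and $\det(D_V(k)-\lambda I)$, yielding $K=\frac{1}{Q}\sum_{n\in W}V(n)=0$. Your additional bookkeeping (the $V$-independence of the Laplacian's trace contribution and the reindexing $n_j\mapsto n_j-l_j$ that removes the shift) correctly fills in the details the paper leaves implicit.
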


\begin{proof}
Comparing the coefficients of $\lambda^{Q-1}$, we have
\begin{equation*}
K=\frac{1}{Q} \sum_{n\in W} V(n)=0.
\end{equation*}
\end{proof}
 \begin{lemma}\label{keylem1}
 Let $d=1$. Fix $l_1\in \{0,1,\cdots, q_1-1\}$. Then the following two statements are equivalent:
 	\begin{enumerate}
 		\item  A $\Gamma$-periodic function $V$ satisfies for all  $\lambda\in\C$, $$	\det (D_V(0)-\lambda I ) =\prod_{m=0}^{q_1-1}\left( e^{2\pi \frac{m}{q_1} i}+e^{-2\pi \frac{m+l_1}{q_1} i}-\lambda \right).$$
 		\item   A $\Gamma$-periodic function $V$ satisfies  for all  $\lambda\in\C$ and $k\in\C$, $$	\det (D_V(k)-\lambda I ) =\prod_{m=0}^{q_1-1}\left( e^{2\pi \frac{m+k}{q_1} i}+e^{-2\pi \frac{m+l_1+k}{q_1} i}-\lambda \right).$$
 	\end{enumerate}
 
\end{lemma}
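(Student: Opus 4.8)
\textbf{Proof plan for Lemma \ref{keylem1}.}
The plan is to exploit the fact that in dimension $d=1$ the matrix $D_V(k)$ depends on $k$ through the single Floquet multiplier $z=e^{2\pi i k}$ in a very rigid, affine-in-$z^{\pm 1}$ way, so that the characteristic polynomial is a Laurent polynomial in $z$ of known, small $z$-degree. Concretely, after the substitution $z=e^{2\pi i k}$ we write $\mathcal{P}_V(z,\lambda)=\det(\mathcal{D}_V(z)-\lambda I)$; expanding the determinant along the two "corner" entries that carry the $z$ and $z^{-1}$ factors (coming from the Floquet boundary condition wrapping around the period-$q_1$ cycle) shows that $\mathcal{P}_V(z,\lambda)$, as a polynomial in $z$, has the form $(-1)^{q_1}z + (\text{terms independent of } z) + (-1)^{q_1}\overline{(\cdot)}\,z^{-1}$ — that is, only the powers $z^{1},z^{0},z^{-1}$ occur, and the $z^{1}$ and $z^{-1}$ coefficients are constants (independent of $\lambda$), equal to $(-1)^{q_1+1}$ up to sign. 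The direction (2)$\Rightarrow$(1) is trivial: set $k=0$. So the real content is (1)$\Rightarrow$(2).

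For (1)$\Rightarrow$(2), I would first record the structural identity: there are Laurent polynomials (in fact a polynomial $p(\lambda)$ in $\lambda$ and constants $c,c'$) such that
\begin{equation*}
\mathcal{P}_V(z,\lambda) = c\,z + p_V(\lambda) + c'\,z^{-1},
\end{equation*}
where $c$ and $c'$ do not depend on $V$ (they come only from the off-diagonal $1$'s of $\Delta$ together with the boundary twist) — a direct cofactor computation gives $c=c'=(-1)^{q_1+1}$ when $q_1\geq 2$, and the degenerate case $q_1=1$ is handled separately (there $D_V(k)$ is $1\times 1$ and the claim is immediate). The same computation applied to the model operator on the right-hand side of statement (1) shows that its characteristic polynomial has exactly the same $z$ and $z^{-1}$ coefficients, namely
\begin{equation*}
\prod_{m=0}^{q_1-1}\left( e^{2\pi \frac{m+k}{q_1} i}+e^{-2\pi \frac{m+l_1+k}{q_1} i}-\lambda \right) = c\,z + \tilde{p}(\lambda) + c'\,z^{-1}
\end{equation*}
for a polynomial $\tilde p(\lambda)$ independent of $k$; here one uses $\prod_{m=0}^{q_1-1}e^{2\pi\frac{m+k}{q_1}i}$-type products evaluating to $\pm z$ and the analogous product giving $\pm z^{-1}$, while all "mixed" contributions collapse to the $k$-independent polynomial $\tilde p(\lambda)$. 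Since statement (1) asserts $p_V(\lambda)=\tilde p(\lambda)$ for all $\lambda$ (that is the $z=1$, $k=0$ content, once we subtract off the $z^{\pm1}$ terms which already match), and since the $z^{1},z^{-1}$ coefficients $c,c'$ agree universally, we get $\mathcal{P}_V(z,\lambda)$ equals the model characteristic polynomial for every $z\in\C^\star$ and every $\lambda$, which is exactly statement (2).

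The one point requiring care — the main obstacle — is verifying the structural identity that only the powers $z^{1},z^{0},z^{-1}$ appear and that the extreme coefficients are the $V$-independent constants $c,c'$: this is the cofactor/permutation-expansion bookkeeping for the cyclic structure of $D_V(k)$ with the Floquet phase, and one must make sure no higher powers of $z$ sneak in (they cannot, because each matrix entry of $D_V(k)$ is a constant plus at most a single $z$ or a single $z^{-1}$, and a nonzero term in the Leibniz expansion picks up $z$ only from the unique super-corner entry and $z^{-1}$ only from the unique sub-corner entry, these two choices being mutually exclusive in any single permutation once $q_1\geq 2$). Once this is in hand, the lemma follows by matching coefficients as above, with the trivial cases $q_1=1$ checked by inspection. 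This is essentially the one-dimensional instance of the mechanism already used in Lemma \ref{keylem}, so I would phrase it to dovetail with that computation.
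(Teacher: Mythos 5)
Your proposal is correct and follows essentially the same route as the paper: both arguments show that $\det(D_V(k)-\lambda I)$ and the model product each equal $(-1)^{q_1+1}\left(e^{2\pi i k}+e^{-2\pi i k}\right)$ plus a $k$-independent polynomial in $\lambda$ (the paper justifies the collapse of the fractional powers of $e^{2\pi i k/q_1}$ in the product via its invariance under $k\mapsto k+1$, which you should make explicit), so that agreement at $k=0$ forces agreement for all $k$. One minor slip: a Leibniz permutation can use both corner entries simultaneously (the transposition of the first and last sites), so the two choices are not mutually exclusive as you claim; however, that term contributes $z\cdot z^{-1}=1$ and hence has $z$-degree $0$, so your structural conclusion is unaffected.
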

\begin{proof}
Recall that $z=e^{\frac{2\pi ik}{q_1}}$ and $\tilde{\mathcal P}(z,\lambda)=	\det (D_V(k)-\lambda I ) $.
Direct computations  show that 
\begin{align}
\tilde{\mathcal P}(z,\lambda)=& (-1)^{q_1+1}e^{2\pi ik}+(-1)^{q_1+1}e^{-2\pi ik}+h_V(\lambda)\nonumber\\
	  =& (-1)^{q_1+1}z^{q_1}+(-1)^{q_1+1} z^{-q_1}+h_V(\lambda),\label{gnew2}
\end{align}
where $h_V(\lambda)$ is a polynomial of $\lambda$ with coefficients depending on $V$ (not depending on $k$).
Let $$T(z,\lambda)=\prod_{m=0}^{q_1-1}( ze^{2\pi \frac{m}{q_1} i}+z^{-1}e^{-2\pi \frac{m+l_1}{q_1} i}-\lambda ).$$
It is clear to see that $T(z,\lambda)$ is a Laurent polynomial in variable $z$ with the highest degree term $(-1)^{q_1+1} z^{q_1}$ and 
the lowest degree term $(-1)^{q_1+1} z^{-q_1}$. 
Noting that $T(z,\lambda)=T(e^{2\pi \frac{m}{q_1} i}z,\lambda )$ for all $m\in\Z$, we conclude that $T(z,\lambda)$ is a Laurent polynomial in variable $z^{q_1}$. Therefore, there exists $T_0(\lambda)$, a polynomial in $\lambda$, such that 
\begin{equation}\label{gnew3}
T(z,\lambda)=(-1)^{q_1+1}z^{q_1}+(-1)^{q_1+1} z^{-q_1}+T_0(\lambda).
\end{equation} 
By \eqref{gnew2} and \eqref{gnew3},
 $\tilde{\mathcal P}(z,\lambda)= T(z,\lambda)$  for all $z\in\C^\star$ and $\lambda \in \C$ if and only if 
$h_V(\lambda)= T_0(\lambda) $  for all  $\lambda \in \C$. This completes the proof.

\end{proof}
The following lemmas   have been  proved by  an algebraic approach in ~\cite{fri}. 
\begin{lemma}\label{keythm}\cite{fri} 
Given an $N \times N$ complex matrix $M$ and an arbitrary set of $N$ complex numbers $\eta_m$, $m=0,1,2,\cdots,N-1$. There exists    a  $N\times N$ diagonal matrix   $\tilde M$ such that the eigenvalues of $M + \tilde M$ are precisely 
 $\eta_m$, $m=0,1,2,\cdots,N-1$.
\end{lemma}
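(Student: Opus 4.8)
\textbf{Proof proposal for Lemma \ref{keythm}.}

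The plan is to prove this by a determinant/counting argument based on the inverse eigenvalue problem for the sum of a fixed matrix and a variable diagonal matrix. Write $\tilde M = \operatorname{diag}(t_1,t_2,\dots,t_N)$ with the $t_i$ as unknowns, and consider the characteristic polynomial $p(\lambda; t_1,\dots,t_N) = \det(M + \tilde M - \lambda I)$. Expanding this determinant, $p$ is a polynomial in $\lambda$ of degree $N$ with leading coefficient $(-1)^N$, and its lower-order coefficients are polynomials in $t_1,\dots,t_N$. The prescribed spectrum $\{\eta_0,\dots,\eta_{N-1}\}$ amounts to requiring $p(\lambda;t) = \prod_{m=0}^{N-1}(\eta_m - \lambda)$, i.e.\ matching the $N$ coefficients of $\lambda^0,\dots,\lambda^{N-1}$. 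So one must solve a system of $N$ polynomial equations in the $N$ unknowns $t_1,\dots,t_N$.

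First I would identify the top-degree part of each coefficient equation. The coefficient of $\lambda^{N-j}$ in $p(\lambda;t)$ is, up to sign, $e_j(t_1,\dots,t_N)$ plus terms of lower degree in $t$ coming from the off-diagonal entries of $M$; here $e_j$ is the $j$-th elementary symmetric polynomial. Indeed, the degree-$N$ part of $\det(M+\tilde M - \lambda I)$ in the combined variables $(t,\lambda)$ is $\prod_i(t_i - \lambda)$, because only the diagonal contributes at top degree. Hence the system has the shape ``$e_j(t) = (\text{prescribed symmetric function of }\eta) + (\text{lower order in }t)$'' for $j=1,\dots,N$. The key algebraic input is then a Bézout-type statement: a system of $N$ polynomials in $N$ variables whose top-degree homogeneous parts are $e_1,\dots,e_N$ has a common zero, because the only common projective zero at infinity of $e_1=\dots=e_N=0$ would force all $t_i=0$ (the elementary symmetric polynomials vanish simultaneously only at the origin), so there is no solution at infinity and the affine system is solvable by the projective dimension/Bézout argument. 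This is exactly the type of reasoning the paper invokes for Theorem \ref{mainthm4}, so I would either cite \cite{fri} directly or reproduce this short homogeneous-leading-term argument.

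The main obstacle is making the ``no zeros at infinity'' step rigorous: one must check that the homogenization of the system, restricted to the hyperplane at infinity, has only the trivial solution, which reduces to the elementary but essential fact that $e_1=e_2=\dots=e_N=0$ over $\C$ forces $t_1=\dots=t_N=0$ (all roots of $\prod(x-t_i)=x^N$ are zero). Once that is in hand, the classical projective Bézout/properness argument guarantees the affine variety cut out by the $N$ equations is nonempty (in fact $0$-dimensional with at most $N!$ points, counted with multiplicity, matching $\deg e_j = j$ and $\prod j = N!$). I would then conclude: pick any common solution $(t_1,\dots,t_N)$, set $\tilde M = \operatorname{diag}(t_1,\dots,t_N)$, and by construction $\det(M+\tilde M - \lambda I) = \prod_{m=0}^{N-1}(\eta_m-\lambda)$, so the eigenvalues of $M+\tilde M$ are exactly $\eta_0,\dots,\eta_{N-1}$. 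Since the statement is attributed to \cite{fri}, the cleanest write-up is simply to cite it, noting that the argument is the finite-dimensional shadow of the algebraic-geometry input used later in Section \ref{S5}.
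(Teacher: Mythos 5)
Your proposal is correct and follows essentially the route the paper itself indicates: the paper cites \cite{fri} for this lemma and, in the remark immediately after, describes exactly your reduction to the system \eqref{g200} with elementary symmetric leading parts, noting it can be solved by the algebraic-geometry (Bézout, no zeros at infinity) argument of \cite{fri2} or by topological degree \cite{ale}. Your identification of the top-degree parts and the ``$e_1=\cdots=e_N=0$ forces $t=0$'' step is the correct key point, and the $N!$ bound you obtain is the same one the paper uses in Lemma \ref{keylem5}.
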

\begin{lemma}\label{keylem5}\cite{fri} 
	Fix any $\Gamma$-periodic function $V$ and $k_0\in\C^d$. 
	Then there are at most $Q!$ functions $Y$ satisfying $	\det (D_Y(k_0)-\lambda I ) =	\det (D_V(k_0)-\lambda I )$ for all $\lambda\in\C$. In particular, for any fixed $\Gamma$-periodic function $V$, 
	\begin{equation*}
\#\{ Y\in X: Y \sim V\}\leq Q!.
	\end{equation*}
\end{lemma}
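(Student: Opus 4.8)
\textbf{Proof proposal for Lemma \ref{keylem5}.}

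The plan is to reduce the statement to a counting question about diagonal perturbations of a fixed matrix. Fix a $\Gamma$-periodic function $V$ and a point $k_0\in\C^d$. Writing $D_Y(k_0)$ in the basis provided by Lemma \ref{lesep} (with $z_j$ the values coming from $k_0$), we see that $D_Y(k_0)$ decomposes, after the unitary equivalence, as $A(k_0)+B_Y$, where $A(k_0)$ depends only on $k_0$ and $\Gamma$, and $B_Y$ is the Toeplitz matrix built from the Fourier coefficients $\hat Y$. The key structural observation is that $B_Y$ is \emph{not} an arbitrary matrix: the map $Y\mapsto B_Y$ is a linear bijection between $\Gamma$-periodic functions and $Q\times Q$ matrices of the prescribed Toeplitz/circulant shape, and in particular $Y$ is uniquely determined by the diagonal entries of $B_Y$ together with its (fixed, $V$-independent once we also fix that we are comparing with $V$\dots no: the off-diagonal entries of $B_Y$ are $\hat Y(n-n')$, which \emph{do} vary with $Y$). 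So the first step is to argue more carefully: I would instead conjugate so that the relevant operator becomes $M+\tilde M$ with $M$ a \emph{fixed} matrix (independent of $Y$) and $\tilde M$ diagonal and in bijection with $Y$. Concretely, I would diagonalize the circulant structure in one coordinate direction at a time so that multiplication by $V$ becomes a genuine diagonal matrix $\tilde M_V$ with entries $(V(n))_{n\in W}$, while $\Delta$ (evaluated at $k_0$) becomes a fixed matrix $M=M(k_0,\Gamma)$; then $D_Y(k_0)$ is unitarily equivalent to $M+\tilde M_Y$, and $Y\mapsto \tilde M_Y$ is a linear bijection onto diagonal matrices.

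With this set-up, the condition $\det(D_Y(k_0)-\lambda I)=\det(D_V(k_0)-\lambda I)$ for all $\lambda$ says exactly that $M+\tilde M_Y$ and $M+\tilde M_V$ have the same characteristic polynomial, i.e.\ the same multiset of eigenvalues $\{\eta_0,\dots,\eta_{Q-1}\}$ (counting algebraic multiplicity). So the question becomes: given a fixed $Q\times Q$ matrix $M$ and a fixed multiset $\{\eta_m\}$ of $Q$ complex numbers, how many diagonal matrices $\tilde M$ satisfy $\sigma(M+\tilde M)=\{\eta_m\}$? Here Lemma \ref{keythm} guarantees at least one such $\tilde M$ exists; I need the \emph{upper bound} $Q!$ on the number of solutions. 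The natural approach is to set up the polynomial system: writing $\tilde M=\mathrm{diag}(x_1,\dots,x_Q)$, the equations ``$e_j(\text{eigenvalues of }M+\tilde M)=e_j(\eta)$'' for $j=1,\dots,Q$ (elementary symmetric functions) form a system of $Q$ polynomial equations in $Q$ unknowns $x_1,\dots,x_Q$; equivalently, one may encode the condition as $\det(M+\tilde M-\eta_m I)=0$ is forced once the char.\ polys agree, but the cleanest route is: the char.\ polynomial of $M+\tilde M$, as a polynomial in $\lambda$ with coefficients that are polynomials in $x_1,\dots,x_Q$, must equal $\prod(\lambda-\eta_m)$; the coefficient of $\lambda^{Q-j}$ gives one equation $g_j(x_1,\dots,x_Q)=\text{const}$. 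Crucially $g_j$ is a polynomial of degree exactly $j$ in $x=(x_1,\dots,x_Q)$, with top-degree part equal to $e_j(x_1,\dots,x_Q)$ (since the highest-order-in-$x$ part of $\det(M+\tilde M-\lambda I)$ is $\det(\tilde M-\lambda I)=\prod(x_i-\lambda)$).

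The main obstacle — and the heart of the argument — is bounding the number of solutions of this triangular-ish system. By Bézout's theorem the number of isolated solutions (in $\mathbb P^Q$, or counted properly) is at most $\prod_{j=1}^Q \deg g_j = 1\cdot 2\cdots Q = Q!$, \emph{provided} there are finitely many solutions and no solutions at infinity absorbing the count. So the real work is: (i) show the system has only finitely many solutions — this should follow because the leading forms $e_1(x),\dots,e_Q(x)$ have only the trivial common zero $x=0$ in $\C^Q$, hence (by the standard criterion) the system $g_j(x)=c_j$ has no solutions at infinity and thus finitely many affine solutions; and (ii) invoke Bézout to get the bound $Q!$. Then, since distinct $Y$ give distinct diagonal $\tilde M_Y$ (the bijection $Y\leftrightarrow \tilde M_Y$), the count of admissible $Y$ is at most $Q!$. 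The final sentence of the lemma, $\#\{Y\in X: Y\sim V\}\le Q!$, is then immediate from Lemma \ref{keylem3}: $Y\sim V$ means $\det(D_Y(k)-\lambda I)=\det(D_V(k)-\lambda I)$ for \emph{all} $k$, which in particular holds at $k=k_0$, so the set of such $Y$ injects into the (size $\le Q!$) solution set for the single point $k_0$. I expect step (i), verifying the common-zero-at-infinity condition for the leading forms, to be the one requiring genuine care; everything else is bookkeeping plus a citation to \cite{fri} for the algebraic input if one prefers to black-box it.
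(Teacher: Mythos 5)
Your proposal is correct and follows essentially the route the paper itself takes: the paper offers no proof of Lemma \ref{keylem5} beyond citing \cite{fri}, and its accompanying remark describes precisely your reduction to a system $\sigma_i(x)+g_i(x)=0$ with $\deg g_i<i$ (elementary symmetric leading forms, lower-order corrections), whose solution count is bounded by $Q!$ via the no-solutions-at-infinity-plus-B\'ezout argument you outline, attributed there to \cite{fri2}. Your self-correction from the Fourier/Toeplitz picture of Lemma \ref{lesep} to the position-space picture --- where the potential really is the diagonal matrix $\mathrm{diag}(Y(n))_{n\in W}$ added to the fixed matrix $M=D_{\bf 0}(k_0)$ --- is the right move, and the final passage via Lemma \ref{keylem3} (evaluating the Floquet isospectrality at the single point $k_0$) is exactly as in the paper.
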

\begin{remark}
  The proof of both Lemmas \ref{keythm} and \ref{keylem5} can be reduced to studying the solutions of a system of polynomial equations with the following form:
  \begin{equation}\label{g200}
  \sigma_i(x_1,x_2,\cdots, x_N)+g_i(x_1,x_2,\cdots, x_N)=0, \quad i=1,2,\cdots,N,
  \end{equation}
  where $\sigma_i$, $i=1,2,\cdots,N$, represents the $N$ elementary symmetric polynomials of $x_1,x_2,\cdots, x_N$, and the degree of each $g_i$, $i=1,2,\cdots,N$, is less than $i$. Friedland initially proved Lemmas \ref{keythm} and \ref{keylem5} using an algebraic approach~\cite{fri}. Subsequently, Lemmas \ref{keythm} and \ref{keylem5} were proved using different approaches: topological degree arguments ~\cite{ale} and algebraic geometry ~\cite{fri2}. Additionally, Lemma \ref{keylem5} was established by Kappeler in ~\cite{kap1,kap3}.

\end{remark}
\begin{proof}[\bf Proof of Theorem \ref{mainthm4}]

Since the cardinality of $W$ is $Q$, 	by  Lemmas \ref{keylem3} and \ref{keylem4}, one has that 
$\# \{ X_e/\sim\}\leq Q$.  	By Lemma \ref{keylem5},   \eqref{gnew58}  holds. 
Therefore, in order to prove Theorem \ref{mainthm4}, by Lemma \ref{keylem4}, we only need to show that 
for any $ l\in W$, there exists a complex-valued $\Gamma$-periodic function $V$ such that 
\begin{equation}\label{gnew43}
\det (D_V(k)-\lambda I ) = \prod_{n\in W}\left( -\lambda+\sum_{j=1}^d \left(e^{2\pi \frac{n_j+k_j}{q_j} i} +e^{-2\pi \frac{n_j+l_j+k_j}{q_j} i}\right) \right).
\end{equation}
By constructing separable functions with the form $V(n)=\sum_{j=1}^dV_j(n_j)$, it suffices to prove the existence of functions $V$ such that \eqref{gnew43} holds for   $d=1$. 
Assume $d=1$ ($\Gamma=q_1\Z$).   Fix $l_1\in \{0,1,\cdots, q_1-1\}$.
Applying Lemma \ref{keythm} with $N=q_1$, $M=D_{\bf 0} (0)$, $\eta_m= e^{2\pi \frac{m}{q_1} i}+e^{-2\pi \frac{m+l_1}{q_1} i}$, $m=0,1,2,\cdots, q_1-1$, there exists a $\Gamma$-periodic function  $V$ such that
$D_V(0)$ has eigenvalues $\eta_m= e^{2\pi \frac{m}{q_1} i}+e^{-2\pi \frac{m+l_1}{q_1} i}$, $m=0,1,2,\cdots, q_1-1$. This implies that  for all $\lambda\in\C$, 
\begin{equation}\label{gnew4}
\det (D_V(0)-\lambda I ) =\prod_{m=0}^{q_1-1}\left( e^{2\pi \frac{m}{q_1} i}+e^{-2\pi \frac{m+l_1}{q_1} i}-\lambda \right).
\end{equation}
By Lemma \ref{keylem1} and \eqref{gnew4}, one has that  for all $\lambda\in\C$ and $k\in\C$, 
\begin{equation}\label{gnew5}
\det (D_V(k)-\lambda I ) =\prod_{m=0}^{q_1-1}\left( e^{2\pi \frac{m+k}{q_1} i}+e^{-2\pi \frac{m+l_1+k}{q_1} i}-\lambda \right).
\end{equation}
We finish the proof.
\end{proof}

	\section*{Acknowledgments}
W. Liu was a 2024-2025 Simons fellow.
The  research was supported in part by NSF DMS-2000345, DMS-2052572 and DMS-2246031.

\section*{Statements and Declarations}
{\bf Conflict of Interest} 
The author  declares no conflicts of interest.

\vspace{0.2in}
{\bf Data Availability}
Data sharing is not applicable to this article as no new data were created or analyzed in this study.

\end{document}